\def\d{\partial}
\newcommand{\XXX}{\mathbb{X}}
\newcommand{\RRR}{\mathbb{R}}
\newcommand{\TTT}{\mathbb{T}}
\newcommand{\ip}[1]{\langle {#1} \rangle}
\newcommand{\CC}{\mathcal{C}}
\newcommand{\utheta}{\ensuremath{\underbar{\ensuremath{\theta\!}}}\,}
\newcommand{\spn}{\mathrm{span}}
\newcommand{\vh}{\hat v}
\newcommand{\wh}{\hat w}
\newcommand{\xt}{\tilde x}
\newcommand{\dmsr}{\;dy_4 dy_3 dy_2 dy_1}
\newcommand{\Ec}{\ensuremath{\mathcal{E}^\mathrm{curl}}}
\newcommand{\Ed}{\ensuremath{\mathcal{E}^\mathrm{div}}}
\newcommand{\nhkcurl}[2]{\ensuremath{\left\| {#1} \right\|_{H^k(\mathrm{curl},{#2})}}}
\newcommand{\nhkdiv}[2]{\ensuremath{\left\| {#1} \right\|_{H^k(\mathrm{div},{#2})}}}
\newcommand{\TT}{\mathcal{T}}
\newcommand{\defn}{\;\stackrel{\mathrm{def}}{=}\;}
\newcommand{\dive}{\mathop\mathrm{div}}
\newcommand{\grad}{\ensuremath{\mathop{\mathrm{grad}}}}
\newcommand{\gradt}{\ensuremath{\mathop{\mathrm{grad}_\tau}}}
\newcommand{\curl}{{\ensuremath\mathop{\mathrm{curl}\,}}}
\newcommand{\Hdiv}[1]{H(\dive,#1)}
\newcommand{\hdiv}[2]{H(\dive,#1,#2)}
\newcommand{\hodiv}[2]{H_{0,{#1}}(\dive,#2)}
\newcommand{\hocurl}[2]{H_{0,{#1}}(\curl,#2)}
\newcommand{\hcurl}[2]{H(\curl,#1,#2)}
\newcommand{\trc}{\mathop{\mathrm{trc}}}
\newcommand{\trct}{\mathop{\mathrm{trc}_\tau}}
\newcommand{\trcn}{\mathop{\mathrm{trc}_n}}
\newcommand{\Clement}{{C}l\'ement}
\newcommand{\Schoberl}{Sch{\"o}berl}
\newcommand{\Nedelec}{N{\'{e}}d{\'{e}}lec } 
\newcommand{\om}{{\Omega}}
\newcommand{\gm}{{\Gamma}}
\newtheorem{theorem}{Theorem}[section]
\newtheorem{lemma}{Lemma}[section]
\theoremstyle{definition}
\theoremstyle{remark}
\newtheorem{remark}[theorem]{Remark}
\newtheorem{definition}[theorem]{Definition}
\newtheorem{assumption}{Assumption}
\begin{document}

\title[Expansion of a Lipschitz domain]
{Partial expansion of a Lipschitz domain and some applications}

\author{J. Gopalakrishnan}
\address{University of Florida, Department of
  Mathematics, Gainesville, FL 32611--8105}
\email{jayg@ufl.edu}

\author{W.~Qiu}
\address{Institute for Mathematics and its Applications, University of Minnesota, 207 Church Street~S.E.,Minneapolis, MN 55455}
\email{qiuxa001@ima.umn.edu}
\thanks{Corresponding author: Weifeng Qiu (qiuxa001@ima.umn.edu)}

\thanks{This work was supported in part by the NSF under
  DMS-1014817. The authors also gratefully acknowledge the support
  from the IMA (Minneapolis) during their 2010-11 annual program}

% 26B35  	Special properties of functions of several variables, 
%                   Hölder conditions, etc.
% 26B12  	Calculus of vector functions
% 26A16  	Lipschitz (Hölder) classes
% 52B10  	Three-dimensional polytopes
% 46E35  	Sobolev spaces and other spaces of "smooth'' 
%                   functions, embedding theorems, trace theorems
% 65L60  	Finite elements, Rayleigh-Ritz, Galerkin 
%                   and collocation methods

\subjclass[2010]{65L60, 65N30, 46E35, 52B10, 26A16}

\date{}

\dedicatory{}

%    Abstract is required.
\begin{abstract}
  We show that a Lipschitz domain can be expanded solely near a part
  of its boundary, assuming that the part is enclosed by a piecewise
  $C^1$ curve. The expanded domain as well as the extended part are
  both Lipschitz.  We apply this result to prove a regular
  decomposition of standard vector Sobolev spaces with vanishing
  traces only on part of the boundary. Another application in the
  construction of low-regularity projectors into finite element spaces
  with partial boundary conditions is also indicated.
\end{abstract}

\keywords{Lipschitz domains, regular decomposition, mixed boundary
  conditions, transversal vector fields, extension operators, Schwarz
  preconditioner, bounded cochain projector, divergence, curl,
  \Schoberl\ projector}

\maketitle

\section{Introduction}

Boundary value problems posed on non-smooth domains, particularly
polyhedral domains, are pervasive in computational mathematics. As
such, it is central to understand the properties of functions spaces
on such domains. Lipschitz regularity of the boundary of a
computational domain is often a standard assumption in such
studies. In this work we provide a theoretical tool for Lipschitz
domains which can be useful when working with function spaces
resulting from boundary value problems with mixed boundary conditions,
i.e., when part of a Lipschitz boundary is endowed with essential
boundary conditions, while the remainder has natural boundary
conditions. 

Suppose $\om$ is a three-dimensional Lipschitz domain.  We will show
that given a part $\gm \subsetneq \d\om$ of the boundary (satisfying
certain regularity assumptions), there is a larger Lipschitz domain
$\tilde \om$ which is obtained by extending $\om$ only near $\gm$. The
existence of this domain is proved constructively, by transporting
$\gm$ using a transversal vector field. A number of technical problems
need to be overcome for the proof.  We adapt several known
techniques~\cite{Grisv85,HofmaMitreTaylo07}, such as the construction
of a smooth transversal vector field in a neighborhood of Lipschitz
domains, and the equivalence between Lischitzness and uniform cone
property, to surmount the technicalities.

As an example of how to apply the result, we use the expanded domain 
to prove a decomposition result for two Sobolev spaces of vector
functions whose (tangential or normal) traces vanish only on a part of
the boundary.  The analogues of these decompositions for the case of
no boundary conditions have been known in the
literature~\cite{BirmaSolom90}. They are often called ``regular
decompositions''~\cite{HiptmLiZhou09}. Such decompositions have turned
out to be a valuable tool in proving convergence of numerical
algorithms. As another application of the domain expansion result, we
provide a missing detail in the construction of low-regularity bounded
cochain projectors (\Schoberl\ projectors) having partial boundary
conditions.

We begin by stating our geometrical assumptions in the next section. There
we will also state Theorem~\ref{thm:expand} on the existence of the
expanded domain. In Section~\ref{sec:application}, we discuss a few
applications. In Section~\ref{sec:proof} we prove
Theorem~\ref{thm:expand}.

\section{Partial expansion of a Lipschitz domain}   \label{sec:expan}

We consider a three-dimensional domain $\om$.  In this section, we
state our result on how a {\em part} $\gm \subsetneq \d\om$ of a
Lipschitz boundary can be transported outward maintaining Lipschitz
regularity, under suitable assumptions.  The precise assumptions on
the domain $\Omega$ and the part of its boundary $\Gamma$ will be
detailed below.

\subsection{Geometrical  assumptions}

We begin with some standard definitions, restated in an equivalent
form convenient for our purposes.  These definitions also establish
the notations that we will use later in proofs.

\begin{definition} \label{defn:Lip}
  Let $D$ be a nonempty, proper open subset of~$\mathbb{R}^3$. Fix
  $p\in\d D$.  We call $D$ a {\em locally Lipschitz domain} near $p$
  if there exists a open neighborhood $\CC_{r,h}$ of $p$ in
  $\mathbb{R}^3$, and new orthogonal coordinates $(x_1,x_2,x_3)$
  such that in the new coordinates, $p=(0,0,0)$, the neighborhood can
  be represented by
  \[
  \CC_{r,h}=\{(x_1,x_2,0)+t\hat u:\; (x_1,x_2)\in (-r,r)\times
  (-r,r) \text{ and } -h< t <h \},
  \]
  where the vector $\hat u=(0,0,1)$ in the local coordinates,
  and
\begin{align}
\CC_{r,h}\cap D 
&=\CC_{r,h}\cap\{(x_1,x_2,0)+t\hat{u}: \;
    x_1,x_2\in (-r,r),  \; t>\zeta(x_1,x_2)\},
\\
\CC_{r,h}\cap\partial D 
&=\CC_{r,h}\cap\{(x_1,x_2,0)+t\hat{u}:\;
   x_1,x_2\in (-r,r), \;   t=\zeta(x_1,x_2)\},
\\
\CC_{r,h}\cap\overline{D}^{c} 
&=\CC_{r,h}\cap\{(x_1,x_2,0)+t\hat{u}:\;
    x_1,x_2\in (-r,r),\; t<\zeta(x_1,x_2)  \},
\end{align}
for some Lipschitz function $\zeta:[-r,r]^2\rightarrow\mathbb{R}$ satisfying
\begin{equation}
\zeta(p)=0,\;
 \text{ and } \vert \zeta(x_1,x_2) \vert < h  \text{ if } 
 x_1,x_2          \in (-r,r).
\end{equation}  
We call $\CC_{r,h}$ a {\em coordinate box near $p$ in the $\hat
  u$-direction}. The boundary $\d D$ is then said to be a {\em
  Lipschitz hypograph near $p$ in the $\hat u$-direction}. A domain
$D$ which is locally a Lipschitz hypograph near every point on~$\d D$ is
simply called a {\em locally Lipschitz} domain.  We say that $D$ is a
{\em Lipschitz domain} if it is a locally Lipschitz domain and $\d D$
is compact.
\end{definition}

This is a standard definition, e.g., it is equivalent
to~\cite[Definition~$1.2.1.1$]{Grisv85} -- see
also~\cite[pp.~89]{McLea00} and~\cite{HofmaMitreTaylo07}. The next
definition allows us to talk about parts of the boundary which are
regular in a certain sense.

\begin{definition}[Piecewise $C^1$ dissection]
\label{defn:diss}
Suppose $D$ is a Lipschitz domain in $\mathbb{R}^3$, with the
accompanying notations above. Consider a disjoint union
\begin{equation}       \label{eq:dissect}
\partial D=\Gamma_1\cup\Pi\cup\Gamma_2,
\end{equation} 
where $\Gamma_1$ and $\Gamma_2$ are disjoint, nonempty, relatively
open subsets of $\partial D$, having $\Pi$ as their common boundary in
$\partial D$.  We call~\eqref{eq:dissect} a {\em piecewise $C^1$
  dissection of $\partial D$} if for any $p\in\partial D$, the
coordinate box $\CC_{r,h}$ near $p$, and the local coordinates
$(x_1,x_2,x_3)$, given by the Lipschitz regularity (see
Definition~\ref{defn:Lip}) are such that the three sets
$\Gamma_1\cap\CC_{r,h}$, $\Pi\cap\CC_{r,h}$ and
$\Gamma_2\cap\CC_{r,h}$ have the representations
\begin{align*}
\Gamma_1\cap\CC_{r,h} & = \{(x_1,x_2,x_3)\in\CC_{r,h}:\;
          x_3 = \zeta(x_1,\varrho(x_1)),\;
          x_2 < \varrho(x_1) \},
\\
\Pi\cap\CC_{r,h} & = \{(x_1,x_2,x_3)\in\CC_{r,h}:\;
           x_3 = \zeta(x_1,\varrho(x_1)),\; 
         x_2 = \varrho(x_1)      \},
\\
\Gamma_2\cap\CC_{r,h} & = \{(x_1,x_2,x_3)\in\CC_{r,h}:\;
         x_3 = \zeta(x_1,\varrho(x_1)), \; x_2 > \varrho(x_1)
           \},
\end{align*}
for some Lipschitz function $\varrho:[-r,r]\rightarrow\mathbb{R}$, and
additionally, the map 
\[
x_1\mapsto (x_1,\varrho(x_1),\zeta(x_1,\varrho(x_1)))
\]
from $[-r,r]$ into $\CC_{r,h}$ is $C^{1}$ on $[-r,r]$, except finitely
many points. These finitely many exceptional points (where $\Pi$ is
not $C^1$) will be enumerated as $p_1,\ldots, p_m$ (see
Figure~\ref{fig:dissec}).
\end{definition}

With the help of these definitions, we now place the following
assumptions on $D$, and the part of the boundary $\Gamma$ used in
imposing the mixed boundary condition.

\begin{assumption}   \label{asm:geom}
  Assume that $\om$ is a Lipschitz domain and $\Gamma \cup \Pi \cup
  (\d \om \setminus\bar{\Gamma})$ is a piecewise $C^1$ dissection of
  $\d \om$. 
  % Also assume that $\d \om$ is connected. 
\end{assumption}

A typical case in practical computations occurs when $\om$ is a
Lipschitz polyhedron and $\gm$ is formed by the union of a few faces
of the polyhedron. We have in mind boundary value problems where one
type of boundary condition is imposed on $\gm$, while another boundary
condition is imposed on the remainder of the boundary.

\begin{figure}
  \centering
  \begin{tikzpicture}
    \coordinate (c1) at (1,1);
    \coordinate (c2) at (3,1);
    \coordinate (c3) at (5,1);
    \coordinate (c4) at (4,2.1);
    \coordinate (c5) at (6.2,0.7);
    \coordinate  (a1) at (0,0);
    \coordinate  (a2) at (2.5,2);
    \coordinate  (a3) at (7,1);
    \coordinate  (a4) at (6,0);
    \coordinate  (b1) at (6.2,-1.2);

    \draw (a1) .. controls (c1) .. (a2);
    \draw (a1) .. controls (c2) and (c3) .. (a4);
    \draw (a2) .. controls (c4) .. (a3);
    \draw (a3) .. controls (c5) .. (a4);

    \coordinate (i1) at ($(a1)!0.6!(a2)$);
    \coordinate (i2) at ($(a2)!0.09!(a3)$);
    \coordinate (i3) at ($(i2)!0.5!(a3)$);
    \coordinate (i4) at ($(i1)!0.15!(i3)$);
    \coordinate (i5) at ($(i4)!0.4!(a1)$);    
   
    \coordinate (i12) at ($(i1)!0.5!(i2)$);
    \coordinate (ci12) at ($(i12)!0.3!(a2)$);
    \coordinate (i54) at ($(i5)!0.5!(i4)$);
    \coordinate (ci54) at ($(i54)!0.1!(a2)$);
    \coordinate (i53) at ($(i5)!0.5!(i3)$);
    \coordinate (ci53) at ($(i53)!0.6!(c4)$);
    \coordinate (cci53) at ($(i53)!0.3!(a2)$);
    \coordinate (i23) at ($(i2)!0.5!(i3)$);
    \coordinate (ci23) at ($(i23)!0.2!(c4)$);

    \draw (i1) .. controls (ci12) .. (i2) 
          (i2) .. controls (ci23) .. (i3)
          (i3) .. controls (ci53) and (cci53) .. (i5)
          (i5) .. controls (ci54) .. (i4)
          (i4) -- (i1);

   \node at ($(i3)!0.3!(i4)$) {\raisebox{-1em}{$\Pi$}};         
   \node[anchor=north west] at ($(i12)!0.75!(a2)$) {$\gm$};         
   \node[anchor=west] at (b1) {$\om$};

   \draw (a1) -- (b1) -- (a4);
   \draw (a3) -- (b1);
   
   % % annotations 
   % \draw (a1) circle (2pt) node [left] {$a_1$};
   % \draw (a2) circle (2pt) node [left] {$a_2$};
   % \draw (a3) circle (2pt) node [left] {$a_3$};    
   % \draw (a4) circle (2pt) node [left] {$a_4$};
   % \fill (c1) circle (2pt) node [left] {$c_1$};     
   % \fill (c2) circle (2pt) node [left] {$c_2$};
   % \fill (c3) circle (2pt) node [left] {$c_3$};
   % \fill (c4) circle (2pt) node [left] {$c_4$};
   % \fill (c5) circle (2pt) node [left] {$c_5$};
   \fill (i1) circle (2pt) node [above left] {$p_1$};     
   \fill (i2) circle (2pt) node [above left] {$p_2$};     
   \fill (i3) circle (2pt) node [right] {$p_3$};     
   \fill (i4) circle (2pt) node [right] {$p_5$};     
   \fill (i5) circle (2pt) node [below left] {$p_4$};         
   
  \end{tikzpicture}
  \caption{A piecewise $C^1$ dissection of the boundary $\d \om$.}
  \label{fig:dissec}
\end{figure}

The next theorem shows that $\om$ can be expanded to a Lipschitz
domain in such a way that the expansion occurs only near $\gm$.

\begin{theorem}[Partial expansion of a Lipschitz domain] 
  \label{thm:expand}
  Suppose Assumption~\ref{asm:geom} holds. Then there exists a
  Lipschitz domain $\Omega^{e}$ such that
  \begin{equation*}
    \Omega\cap\Omega^{e} =\emptyset\qquad\text{ and }\qquad
    \partial\Omega\cap\partial\Omega^{e} = \Gamma \cup \Pi.
  \end{equation*}
  Furthermore, $\tilde\Omega = \Omega \cup \Gamma
  \cup \Omega^e$ is also Lipschitz.
\end{theorem}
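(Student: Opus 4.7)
The strategy is to construct $\Omega^e$ as the image of $\Gamma$ under a short-time flow along a Lipschitz vector field $V$ that is outward and uniformly transversal to $\partial\Omega$ on $\overline{\Gamma}$. The proof splits naturally into (i) constructing the vector field, (ii) defining $\Omega^e$ by the flow and checking the incidence relations with $\Omega$, and (iii) verifying Lipschitz regularity of $\Omega^e$ and $\tilde\Omega$ --- the corners of $\Pi$ being the delicate case.

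First, I would build a Lipschitz vector field $V$ on an open neighborhood $U$ of $\overline{\Gamma}$ in $\mathbb{R}^3$ that is uniformly transversal to $\partial\Omega$ on $\overline{\Gamma}$ and points out of $\Omega$. Such fields are available by the technique of Hofmann--Mitrea--Taylor~\cite{HofmaMitreTaylo07}: in each local coordinate box $\CC_{r,h}$ from Definition~\ref{defn:Lip} around a point of $\overline{\Gamma}$ one can take the constant field $-\hat u$, and then patch these local choices together with a Lipschitz partition of unity subordinate to a finite cover of $\overline{\Gamma}$. A quantitative check in local graph coordinates shows that the resulting $V$ remains uniformly transversal on all of $\overline{\Gamma}$, including at the corner points $p_j$ of $\Pi$.

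Second, I would define, for a sufficiently small $s_0 > 0$,
\[
\Omega^e \defn \{\,x + s V(x) : x \in \Gamma,\; 0 < s < s_0\,\},
\]
and show that, after shrinking $s_0$ if necessary, the map $(x,s) \mapsto x + sV(x)$ is a bi-Lipschitz homeomorphism from $\Gamma \times (0,s_0)$ onto $\Omega^e$ with image contained in $\mathbb{R}^3 \setminus \overline{\Omega}$. This immediately yields $\Omega \cap \Omega^e = \emptyset$ and $\Gamma \cup \Pi \subseteq \partial\Omega \cap \partial\Omega^e$; the reverse inclusion follows from the same transversality, since the ``top face'' $\{x+s_0V(x):x\in\Gamma\}$ and the lateral surface generated over $\Pi$ both stay a positive distance from $\partial\Omega \setminus (\Gamma \cup \Pi)$. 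Lipschitzness of $\Omega^e$ and of $\tilde\Omega = \Omega \cup \Gamma \cup \Omega^e$ at boundary points away from $\Pi$ is then routine: a direct change of variables using the local graph $x_3 = \zeta(x_1,x_2)$ and the near-constant direction of $V$ realizes both $\partial\Omega^e$ and $\partial\tilde\Omega$ as Lipschitz graphs in the sense of Definition~\ref{defn:Lip}.

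Near a point of $\Pi$ there is an edge where the pushed-out part of $\partial\tilde\Omega$ meets $\partial\Omega \setminus \overline{\Gamma}$, and a direct graph representation is awkward; here I would appeal to the equivalence between the Lipschitz graph property and the uniform interior/exterior cone property, as used in~\cite{HofmaMitreTaylo07}, and exhibit the two cones directly using $V$ together with the local graphs of the two adjacent pieces of $\partial\Omega$. The main obstacle I expect lies in this cone construction at the exceptional points $p_1,\dots,p_m$, where $\Pi$ fails to be $C^1$: one must produce interior and exterior cones of uniform aperture despite the absence of a tangent line to $\Pi$. The plan is to exploit the one-sided $C^1$ structure of $\Pi$ on each of the two arcs meeting at $p_j$ supplied by Definition~\ref{defn:diss}, together with the transversality and Lipschitz continuity of $V$, so that for small enough $s_0$ and a suitably chosen cone axis the contributions of the two $C^1$ arcs and of the outward flow combine to enclose a full cone on each side of $\partial\tilde\Omega$.
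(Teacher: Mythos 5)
Your overall strategy is the same as the paper's: build a vector field transversal to $\partial\Omega$ (via a partition of unity over coordinate boxes, following~\cite{HofmaMitreTaylo07}), define $\Omega^e$ as the image of $\Gamma$ under a short-time flow, and at the edge $\Pi$ verify the Lipschitz cone property directly. However, there is a genuine gap precisely at the step you yourself single out as the main obstacle, and your sketch does not resolve it.

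Near a non-exceptional point of $\Pi$ the lateral wall $\Psi_t=\{q+sV(q): q\in\Pi,\,0<s<t\}$ is an honest $C^1$ surface (image of a $C^1$ curve under a $C^1$ flow), and the cone/hypograph direction can be found by elementary geometry; the paper checks this. But near an exceptional point $p_j$ the curve $\Pi$ is only Lipschitz, so $\Psi_t$ is the flow of a merely Lipschitz curve under a merely Lipschitz direction field, and it is not at all clear how to control it. Your plan---splitting $\Pi$ into the two one-sided $C^1$ arcs and arguing that their contributions ``combine to enclose a full cone''---does not actually give a construction: one still needs to write this nonsmooth ruled surface as a Lipschitz graph in a \emph{single} direction to even apply the cone equivalence of Theorem~\ref{thm:cone}, and the two arcs come with different tangent directions, so there is no a priori reason the two half-surfaces share a uniform cone axis. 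The paper resolves this by a design choice you omit: in Lemma~\ref{lem:transvers} the transversal field is arranged to be \emph{constant} in a neighborhood of each $p_j$. With a constant flow direction, $\Psi_t$ over $\Pi$ is a cylinder, so (after a 2D bi-Lipschitz change of variables in the style of Lemma~\ref{lem:inverse}, followed by a rotation into a carefully chosen direction $\wh$, see~\eqref{eq:13}) the lateral wall is a Lipschitz hypograph solely because $\rho$ is Lipschitz. Without that design choice, the lateral graph function picks up a dependence on the transverse coordinate as well, and the needed invertibility is not covered by a straightforward bi-Lipschitz perturbation argument.

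Two further omissions, less central but worth noting: (i) The paper proves a separation lemma (Lemma~\ref{lem:inverse}) guaranteeing that the flow map $(p,s)\mapsto p+sV(p)$ is bi-Lipschitz, which your "after shrinking $s_0$" assertion quietly requires; the Lipschitz constant of $V$ restricted to the boundary interacts with the Lipschitz constant of $\zeta$, and this step is not automatic. (ii) The paper extends the transversal field to all of $\partial\Omega$ and then writes $\Omega^e$ locally near $\Pi$ as the intersection of two simpler domains ($\Sigma_{0,t}\cap\Sigma^1_{-t,t}$), which is what lets it handle the intersection of cones cleanly (Step~1 of Lemma~\ref{lem:bulge}); if you only construct $V$ in a neighborhood of $\overline\Gamma$, you lose this intersection structure and must argue at $\Pi$ entirely by hand. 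None of these are incorrect choices on your part, but as written the proposal leaves the decisive point---the exceptional corners of $\Pi$---without a working argument.
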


The proof of this result is technical mainly because we cannot assume
more than Lipschitz regularity for $\d\om$. The  proof, together with
all the lemmas needed, are gathered in Section~\ref{sec:proof}.

\section{Applications}   \label{sec:application}

In this section, we give some applications of Theorem~\ref{thm:expand}
to questions in computational mathematics. In
\S~\ref{ssec:decomposition}, we use the first conclusion of
Theorem~\ref{thm:expand}, namely that $\om^e$ is Lipschitz, while in
\S~\ref{ssec:schob}, we use the second conclusion, namely that $\tilde
\om$ is also Lipschitz.  

Let us first establish notations for Sobolev spaces.  The set of
functions from $\Omega$ into $\XXX$ whose components are square
(Lebesgue) integrable will be denoted by $L^2(\Omega,\XXX)$, when
$\XXX$ is $\RRR,\RRR^3,\RRR^{3\times 3},$ etc. Let $H^1(\Omega,\RRR^3)
= \{ v \in L^2(\Omega,\RRR^3): \grad v \in L^2(\Omega, \RRR^{3 \times
  3})\}$, and $H^1(\Omega,\RRR)$ is similarly defined. Also, let
\[
\begin{aligned}
\hdiv \Omega {\RRR^3}
&  = \{ v\in L^2(\Omega,\RRR^3): \; \dive v \in
L^2(\Omega,\RRR)\},
\\
\hcurl \Omega {\RRR^3}
&  = \{ v\in L^2(\Omega,\RRR^3): \; \curl v \in
L^2(\Omega,\RRR^3)\}.
\end{aligned}
\]
Higher order analogues of this space are defined by 
\[
H^k(\curl,\Omega,\RRR^3) =
 \{v\in H^k(\Omega,\RRR^3): \curl v \in H^k(\Omega,\RRR^3)\},
\] 
for $k\ge 0$ (for the $k=0$ case, we obtain the previous space). The
space $H^k(\dive,\Omega,\RRR^3)$ is defined similarly.  Before we
proceed to the applications, let us review an interesting result for
the above defined space, recently obtained in~\cite{HiptmLiZhou09}.

\begin{theorem}[Hiptmair, Li, and Zhou~\cite{HiptmLiZhou09}]
\label{thm:steingen}
Let $\Omega$ be a Lipschitz domain and $k\ge 0$ be an integer. Then,
there are extension operators $\Ec: H^k(\curl,\Omega,\RRR^3) \mapsto
H^k(\curl,\RRR^3,\RRR^3)$ and $\Ed: H^k(\dive,\Omega,\RRR^3) \mapsto
H^k(\dive,\RRR^3,\RRR^3)$ and a $C>0$ (dependent on $k$) such that
\begin{align*}
 \nhkcurl{\Ec v}{\RRR^3}
 & \le C \nhkcurl{v}{\Omega},
 && \text{ for all } v \in H^k(\curl,\Omega,\RRR^3),
 \\
 \nhkdiv{\Ed u}{\RRR^3}
 & \le C \nhkdiv{u}{\Omega},
 && \text{ for all } u \in H^k(\dive,\Omega,\RRR^3),
\end{align*}
 where $ \nhkcurl{w}{D} = ( \| w \|_{H^k(D)}^2 + \| \curl w
  \|_{H^k(D)}^2 )^{1/2}$, and $\nhkdiv{w}{D}$ is  similarly defined.
\end{theorem}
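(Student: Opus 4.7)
The plan is to reduce the problem to a local hypograph via a partition of unity on the compact Lipschitz boundary, and then combine Stein's universal extension with a vector-potential correction so that the extension preserves the $H^k$ regularity of both $v$ and $\curl v$ (resp.\ $v$ and $\dive v$). First, I would cover $\d\Omega$ by finitely many coordinate boxes $\CC_{r,h}$ as in Definition~\ref{defn:Lip}, adjoin an interior cutoff, and pick a smooth partition of unity $\{\chi_i\}$ subordinate to this cover. The multiplication map $v\mapsto \chi_i v$ is bounded on $H^k(\curl,\Omega,\RRR^3)$ because iterating $\curl(\chi_i v)=\chi_i\curl v+\grad\chi_i\times v$ only costs derivatives of the smooth function $\chi_i$. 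Hence it suffices to construct $\Ec$ and $\Ed$ for fields supported in a single coordinate box, where $\Omega$ is locally a Lipschitz hypograph.

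In the hypograph case, I would apply Stein's universal extension $S\colon H^k(\Omega)\to H^k(\RRR^3)$ componentwise to $v$, producing $v^\sharp\in H^k(\RRR^3,\RRR^3)$ with $\|v^\sharp\|_{H^k(\RRR^3)}\le C\|v\|_{H^k(\Omega)}$. This alone controls the field in $H^k$, but $\curl v^\sharp$ is only a priori in $H^{k-1}(\RRR^3,\RRR^3)$, so a correction is needed. Separately apply $S$ to $\curl v$ to obtain $g=S(\curl v)\in H^k(\RRR^3,\RRR^3)$. Since $\curl v^\sharp$ and $g$ both restrict to $\curl v$ on $\Omega$, the difference $f:=g-\curl v^\sharp$ vanishes on $\Omega$. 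I would then seek the extension in the form $\Ec v=v^\sharp+\psi$, where $\psi$ satisfies
\begin{equation*}
\curl\psi=f\text{ on }\RRR^3,\qquad \psi=0\text{ on }\Omega,\qquad \|\psi\|_{H^k(\RRR^3)}\le C\,\|v\|_{H^k(\curl,\Omega)}.
\end{equation*}
If such a $\psi$ can be produced, then $\curl\Ec v=g\in H^k(\RRR^3,\RRR^3)$, delivering the desired bound.

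The construction of $\psi$ is the central obstacle. The datum $f$ lies in $H^{k-1}(\RRR^3,\RRR^3)$, is supported in $\overline\Omega^c$, and is only divergence-free on $\Omega$; on $\RRR^3\setminus\overline\Omega$ one only has $\dive f=\dive g$, which need not vanish. I would enclose $\Omega$ in a large ball $B$, truncate $f$ smoothly away from $\d B$, and apply a Bogovskii-type operator to subtract off $\dive g$, producing a modified $\tilde f\in H^{k-1}(B,\RRR^3)$ that is divergence-free and still vanishes on $\Omega$. A regularized Poincar\'e integral operator of Costabel--McIntosh or Mitrea--Mitrea--Monniaux type, applied on a star-shaped open set in $B\setminus\overline\Omega$ (after further Lipschitz partitioning if necessary), then returns a vector potential $\psi$ with $\curl\psi=\tilde f$ and the crucial one-derivative gain from $H^{k-1}$ to $H^k$; a gradient gauge adjustment, together with a smooth cutoff supported away from $\Omega$, arranges that $\psi$ vanishes on $\Omega$ without spoiling the curl identity. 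The argument for $\Ed$ is entirely parallel: one applies $S$ componentwise and corrects with a Bogovskii solve of $\dive\phi=S(\dive v)-\dive v^\sharp$ subject to $\phi=0$ on $\Omega$, thereby gaining the missing derivative on the divergence. Summing the local extensions against $\{\chi_i\}$ recovers the global operators with the claimed bounds.
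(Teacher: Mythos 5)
The paper does not actually prove Theorem~\ref{thm:steingen}; it cites Hiptmair--Li--Zhou, whose construction \emph{modifies Stein's extension integral} so that the resulting operator commutes with the exterior derivative, making the $\curl$- and $\dive$-bounds automatic, and it also points to the Hestenes-reflection commuting extension of~\cite{DemkoGopalSchob09b}. Your route --- componentwise Stein (which does not commute with $\curl$ or $\dive$), followed by a one-derivative-gaining correction $\psi$ with $\curl\psi=f$ and $\psi|_\Omega=0$ --- is a genuinely different strategy, and in principle it can be carried out using the regularized Poincar\'e and Bogovskii operators of Costabel--McIntosh or Mitrea--Mitrea--Monniaux in place of a bespoke commuting extension. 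The trade is that you cite only Stein's scalar theorem plus classical potential operators, at the cost of a correction step that must be done carefully.

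As written, that correction step has real gaps. First, ``a gradient gauge adjustment, together with a smooth cutoff supported away from $\Omega$, arranges that $\psi$ vanishes on $\Omega$ without spoiling the curl identity'' is not a proof: multiplying $\psi$ by a cutoff changes $\curl\psi$ on the support of the cutoff's gradient, so the cutoff alone cannot be what forces $\psi|_\Omega=0$. The gauge adjustment is what actually works --- after localization the coordinate box is simply connected, $\curl\psi_0=0$ on $\Omega$ gives $\psi_0|_\Omega=\grad\phi$, and $\psi:=\psi_0-\grad\tilde\phi$ vanishes on $\Omega$ for any $H^{k+1}(\RRR^3)$-extension $\tilde\phi$ of $\phi$ --- but you must say this, normalize $\phi$ (e.g.\ $\int_\Omega\phi=0$) so that $\|\phi\|_{H^{k+1}(\Omega)}$ is controlled by $\|\psi_0\|_{H^k(\Omega)}$, and bound $\grad\tilde\phi$. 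Once you do so you no longer need a potential supported in $B\setminus\overline{\Omega}$, and the appeal to ``further Lipschitz partitioning'' of that region, which hides a cohomological patching issue, becomes unnecessary. Second, the Bogovskii solve of $\dive w=\dive g$ with $w\in H^k_0(B\setminus\overline{\Omega})$ requires the compatibility condition $\int_{B\setminus\overline{\Omega}}\dive g=0$, which your truncation near $\d B$ does not secure; you must first adjust the chosen $H^k$-extension $g$ of $\curl v$ by a controlled bump supported in $B\setminus\overline{\Omega}$, and likewise for $\Ed$. Third, the base case $k=0$ places $f$ in $H^{-1}(\RRR^3,\RRR^3)$; the potential operators you invoke do operate at negative order, but this needs to be stated since $k=0$ is where the theorem begins. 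These are all repairable, but what you have written is a plausible program rather than a complete proof.
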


This is a generalization of the Stein extension~\cite{Stein70} (and we
will use it in \S~\ref{ssec:decomposition} below).  The construction of
the above extensions are based on generalizing an integral formula of
Stein that extends functions on Lipschitz hypographs, in such a way
that a target commutativity property is satisfied. For any given~$k$,
a simpler such extension based on Hestenes' generalized
reflections~\cite{Heste41} can be constructed, as
in~\cite[\S~2.1]{DemkoGopalSchob09b}. However the result
of~\cite{HiptmLiZhou09} is stronger and gives a universal extension
for all~$k$, as stated above.

\subsection{A decomposition of spaces}   \label{ssec:decomposition}

As an application of Theorem~\ref{thm:expand} (and
Theorem~\ref{thm:steingen}) we now prove a decomposition of Sobolev
spaces that finds utility in analyses of certain computational
algorithms.

By way of preliminaries, recall~\cite{GirauRavia86} that the trace
operator and the normal trace operator, namely,
\[
\trc(v) = v|_{\d\Omega},
\quad\text{and}\quad
\trcn(v) = v\cdot n|_{\d\Omega},
\] 
resp., can be continuously extended from smooth vector functions to
$H^1(\om,\RRR^3)$ and $ \hdiv \Omega {\RRR^3}$, resp.  Throughout, $n$
denotes the outward unit normal on $\d\om$. Let 
\[
H_{0,\gm}^1(\om,\RRR^3) = \{ v \in H^1(\om,\RRR^3): \; \trc(v)|_{\gm} = 0\}. 
\]
Note that all components of a vector function in this space vanish on
$\gm$. Here, the range of $\trc(\cdot)$ is $H^{1/2}(\d\om)$, so the
restriction $\trc(v)|_\gm$ is obviously well-defined.  
We can also give meaning to a similar 
statement on the normal trace 
as follows.
The range of $\trcn$ as a map from $\hdiv\Omega{\RRR^3}$ equals
$H^{-1/2}(\d\Omega)$.  Let $\ip{\cdot,\cdot}_{H^{1/2}}$ be the duality
pairing between $H^{-1/2}(\d\Omega)$ and $H^{1/2}(\d \Omega)$, and let
$H^1_{0,\d\Omega\setminus\Gamma}(\Omega) = \{ z \in L^2(\Omega,\RRR): \;
\grad z \in L^2(\Omega,\RRR^3),\; \text{ and } z|_{\d\Omega \setminus
  \Gamma} =0 \}$.  For vector functions $v\in\hdiv\Omega{\RRR^3}$, we
say that
\begin{equation}
  \label{eq:4}
\trcn(v)|_{\Gamma}=0,   
\end{equation}
if $ \ip{ \trcn (v), \phi}_{H^{1/2}} = 0$ for all $\phi \in
H^1_{0,\d\Omega\setminus\Gamma}(\Omega,\RRR)$.  Define
\[
\hodiv \gm \om \defn \{ v \in \hdiv\om{\RRR^3} : \;  \trcn(v)|_{\Gamma}=0 
\}.
\]
Similarly we define
\[
\hocurl \gm \om 
\defn \{ v \in \hcurl\om{\RRR^3} : 
\ip{ n \times v, \trct(\phi)}_{H^{1/2}}=0 \text{ for all } 
\phi\in H^1_{0,\d\om\setminus \gm}(\om;\RRR^3)
\},
\]
where the tangential trace operator is defined by
\[
\trct (v) = \left( v - (v\cdot n)  n\right)|_{\d\om}
\]
for smooth functions~$v$. It is well known~\cite{GirauRavia86} that
$\trct$ can also be extended as a continuous map from $
\hcurl\om{\RRR^3}$ into $H^{-1/2}(\d\Omega,\TTT)$, where $\TTT$
  is the tangent space (homeomorphic to $\RRR^2$) and so we interpret $\trct(v)|_\gm$ just as
we did $\trcn(v)|_\gm$ in~\eqref{eq:4}.

\begin{theorem}
  \label{thm:decomp}
  Suppose $\om$ is contractible and Assumption~\ref{asm:geom}
  holds. Then, any $v\in \hodiv \gm\om$ can be decomposed into 
  \begin{equation}
    \label{eq:decomp1}
      v = \curl \varphi + \phi, \qquad \text{with}\quad
      \varphi \in H_{0,\gm}^1(\om, \RRR^3)
      \;\text{ and }
      \phi \in  H_{0,\gm}^1(\om, \RRR^3),
  \end{equation}
  and any $u$ in $\hocurl\gm\om$ can be decomposed into
  \begin{equation}
    \label{eq:decomp2}
      u = \grad \xi + \zeta, \qquad \text{with}\quad
      \xi \in H_{0,\gm}^1(\om, \RRR)
      \;\text{ and }
      \zeta \in  H_{0,\gm}^1(\om, \RRR^3).
  \end{equation}
  Moreover, both decompositions are stable, i.e., $\varphi$ and $\phi$
  depend continuously on $v$ (in their respective norms), and
  similarly $\xi$ and $\zeta$ depend continuously on~$u$.
\end{theorem}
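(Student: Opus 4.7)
The plan is to reduce the problem to the known ``no boundary condition'' regular decompositions on the Lipschitz expanded domain $\tilde\om$ provided by Theorem~\ref{thm:expand}, and then to correct the traces on $\gm$ by subtracting a carefully chosen extension from the $\om^e$ side. I describe the argument in detail for the $H(\dive)$ decomposition~\eqref{eq:decomp1}; the $H(\curl)$ case~\eqref{eq:decomp2} is entirely parallel.

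First, given $v \in \hodiv{\gm}{\om}$, extend by zero: $\tilde v = v$ on $\om$ and $\tilde v = 0$ on $\om^e$. Using the defining condition $\trcn(v)|_\gm = 0$ against test functions $\psi \in C_c^\infty(\tilde\om)$ — whose restrictions to $\om$ lie in $H^1_{0,\d\om\setminus\gm}(\om,\RRR)$ — a short integration-by-parts computation yields $\tilde v \in \hdiv{\tilde\om}{\RRR^3}$ with $\dive \tilde v$ equal to the zero-extension of $\dive v$. Next, Theorem~\ref{thm:expand} ensures $\tilde\om$ is Lipschitz, and the collar structure of $\om^e$ (a transversal transport of $\gm$) makes $\tilde\om$ deformation-retract onto $\om$, hence contractible. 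The classical regular decomposition~\cite{BirmaSolom90} on this Lipschitz contractible domain produces
\[
\tilde v = \curl\tilde\varphi + \tilde\phi, \qquad
\tilde\varphi,\,\tilde\phi \in H^1(\tilde\om,\RRR^3),
\]
depending continuously on $\tilde v$.

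The trace correction is the key step. Since $\tilde v = 0$ on $\om^e$, we have $\curl\tilde\varphi = -\tilde\phi$ on $\om^e$, so $\tilde\varphi|_{\om^e}$ lies in $H^1(\curl,\om^e,\RRR^3)$. Applying Theorem~\ref{thm:steingen} with $k=1$ to the Lipschitz domain $\om^e$ furnishes $\Phi \in H^1(\curl,\RRR^3,\RRR^3)$ extending $\tilde\varphi|_{\om^e}$. Define
\[
\varphi = (\tilde\varphi - \Phi)|_\om, \qquad \phi = (\tilde\phi + \curl\Phi)|_\om.
\]
Then $\curl\varphi + \phi = \curl\tilde\varphi + \tilde\phi = v$ on $\om$, and $\varphi,\phi \in H^1(\om,\RRR^3)$ because $\Phi$ and $\curl\Phi$ are both $H^1$ on $\RRR^3$. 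The relations $\Phi|_{\om^e} = \tilde\varphi|_{\om^e}$ and $\curl\Phi|_{\om^e} = \curl\tilde\varphi|_{\om^e} = -\tilde\phi|_{\om^e}$ show that the $H^1(\tilde\om)$ functions $\tilde\varphi - \Phi$ and $\tilde\phi + \curl\Phi$ vanish identically on $\om^e$, so their single-valued traces on the interior interface $\gm$ are zero, giving $\varphi,\phi \in H^1_{0,\gm}(\om,\RRR^3)$ as required.

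The $H(\curl)$ decomposition proceeds by the same scheme, with two substitutions: the regular decomposition on $\tilde\om$ now reads $\tilde u = \grad\tilde\xi + \tilde\zeta$ with $\tilde\xi \in H^1(\tilde\om,\RRR)$ and $\tilde\zeta \in H^1(\tilde\om,\RRR^3)$; and since $\grad\tilde\xi = -\tilde\zeta \in H^1(\om^e,\RRR^3)$, the restriction $\tilde\xi|_{\om^e}$ lies in $H^2(\om^e)$, so a classical Stein $H^2$-extension on the Lipschitz domain $\om^e$ supplies $\Xi \in H^2(\RRR^3)$ for the correction $\xi = (\tilde\xi - \Xi)|_\om$, $\zeta = (\tilde\zeta + \grad\Xi)|_\om$. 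Stability in both cases is immediate from the linearity and boundedness of every constituent operation. The principal technical obstacle is the correction step: a mere $H^1$ extension of $\tilde\varphi|_{\om^e}$ would not keep $\curl\Phi$ in $H^1(\RRR^3)$ and would spoil the $H^1$ regularity of the corrected $\phi$, so it is precisely the differential-structure-preserving extension of Theorem~\ref{thm:steingen} that closes the argument.
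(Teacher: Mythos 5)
Your proposal follows essentially the same route as the paper: zero-extend to $\tilde\om$, obtain a no-boundary-condition regular decomposition there, observe that on $\om^e$ the vector potential lies in $H^1(\curl,\om^e)$, pull it back with the commuting universal extension $\Ec$ of Theorem~\ref{thm:steingen}, and subtract to kill the traces on $\gm$. The only cosmetic differences are that you cite the Birman--Solomyak decomposition directly rather than rebuilding it from a regular right inverse of the divergence and the Amrouche--Bernardi--Dauge--Girault vector-potential lemma, and that you spell out the $H(\curl)$ case via a Stein $H^2$-extension where the paper merely says ``similar''; both are sound.
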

\begin{proof}
  Let us prove the first decomposition for $v\in \hodiv \gm\om$.  Let
  $\Omega^e$ and $\tilde\Omega$ be as given by
  Theorem~\ref{thm:expand} and consider the trivial extension
  \begin{equation}
    \label{eq:vtilde}
     \tilde v = 
  \left\{
    \begin{aligned}
      v, & \text{ on }  \Omega, \\
      0, & \text{ on }  \tilde\Omega\setminus \Omega, 
    \end{aligned}
  \right.
  \end{equation}
  Clearly, $\tilde v$ is in $\hdiv{\tilde\Omega}{\RRR^3}$.
  
  First, we claim that there are functions $\bar\varphi$ and
  $\tilde\theta$, both in $ H^1(\tilde\Omega,\RRR^3)$, such that
  $\tilde v$ can be decomposed as
  \begin{equation}
    \label{eq:81}
    \tilde v = \curl \bar\varphi + \tilde\theta, \qquad\text{ on }\tilde\Omega.
  \end{equation}
  and the component functions $\bar\varphi,\tilde\theta$
  continuously depend on $\tilde v$. To see this, we first use a well known
  regular right inverse of divergence (see~\cite{Necas67},
  \cite[Corollary~2.4]{GirauRavia86}, or more recently~\cite{Bramb03}) to 
  obtain a $\tilde\theta \in H^1(\tilde \Omega,\RRR^3)$ satisfying
  $\dive\tilde\theta = \dive \tilde v$ and
  \begin{equation}
    \label{eq:101}
    \| \tilde\theta\|_{H^1(\tilde\Omega)} 
    \le C \| \dive \tilde v\|_{L^2(\tilde\Omega)} = C \| \dive v \|_{L^2(\Omega)}.
  \end{equation}
  Next, since $\dive(\tilde\theta - \tilde v) = 0$,
  by~\cite[Lemma~3.5]{AmrouBernaDauge98},
  there is a $\bar\varphi \in H^1(\tilde\Omega,\RRR^3)$ such that $
  \tilde\theta - \tilde v = \curl \bar\varphi$ (where we have
  used the contractibility of $\Omega$) and
  \begin{equation}
    \label{eq:111}
    \| \bar\varphi \|_{H^1(\tilde\Omega)}
    \le C \| \tilde\theta - \tilde v\|_{L^2(\tilde\Omega)}
    \le C \| v\|_{\Hdiv\om}.
  \end{equation}
  This
  proves~\eqref{eq:81}.

  Next, observe that when~\eqref{eq:81} is restricted to $\Omega^e$,
  since $\tilde v$ vanishes on $\Omega^e$, we have 
  \begin{equation}
    \label{eq:19}
    (\curl\bar\varphi)\big|_{\Omega^e} =
    -\tilde\theta\big|_{\Omega^e}\; \in H^1(\om^e,\RRR^3).
  \end{equation}
  Hence
  \[
  \bar\varphi|_{\Omega^e}\in H^1(\curl,\Omega^e,\RRR^3).
  \] 
  By Theorem~\ref{thm:expand}, $\om^e$ is Lipschitz, so we can apply
  the universal extension of Theorem~\ref{thm:steingen} to
  $\bar\varphi|_{\Omega^e}$, yielding $\hat\varphi = \Ec \bar\varphi$
  in $H^1(\curl,\RRR^3,\RRR^3)$. Adding and subtracting $\curl
  \hat\varphi$ in~\eqref{eq:81}, we thus have $ \tilde v = \curl (
  \bar\varphi - \hat\varphi) + (\tilde\theta + \curl\hat\varphi).$ In
  other words,
  \begin{equation}
    \label{eq:9}
    \tilde v = \curl \tilde \varphi
    + \tilde\phi,
    \qquad\text{ on } \tilde\Omega,
  \end{equation}
  with $\tilde \varphi = \bar\varphi - \hat \varphi$, and $\tilde\phi
  = \tilde\theta + \curl\hat\varphi$.

  To finish the proof of~\eqref{eq:decomp1}, we now only need to
  restrict the functions in~\eqref{eq:9} to $\om$.  Indeed, with
  $\varphi = \tilde\varphi|_\om$ and $\phi = \tilde\phi|_\om$, we have
  $v = \curl \tilde \varphi + \tilde\phi$.  We need to verify the
  boundary conditions of $\varphi$ and $\phi$.  To show that
  $\varphi|_{\Gamma}=0$, we only need to observe that $ \tilde
  \varphi|_{\om^e} = (\bar\varphi - \hat \varphi)|_{\om^e} = 0$
  because $\hat\varphi$ is the extension of $\tilde\phi$ from
  $\Omega^e$.  We note that {\em all} components of $\varphi$ vanish
  on $\gm$.

  To verify that all components of $\phi$ also vanish on $\gm$, recall
  that $\tilde v |_{\om^e}=0$. Since we observed above that $\tilde
  \varphi|_{\om^e} =0$, all the terms other than $\tilde\phi$
  in~\eqref{eq:9} vanish on $\Omega^e$, so $\tilde \phi$ must vanish
  on $\om^e$ too, and consequently, $\phi|_{\Gamma}=0.$ 

  It only remains to prove that the decomposition is stable. For this,
  \begin{align*}
    \| \phi\|_{H^1(\Omega)}
    & \le \| \tilde\theta\|_{H^1(\Omega)}  + 
       \|\curl(\Ec\tilde\varphi)\|_{H^1(\Omega)}  
        \le \| v \|_{\Hdiv\om}
  \end{align*}
  by~\eqref{eq:101}, Theorem~\ref{thm:steingen}, and ~\eqref{eq:111}.
  Similarly, 
  \begin{align*}
    \| \varphi\|_{H^1(\om)} 
    & \le \| \bar\varphi \|_{H^1(\tilde\om)} 
      + \| \Ec\bar\varphi \|_{H^1(\tilde\om)} 
      \\
    & \le C( \| \bar\varphi \|_{H^1(\tilde\om)} +  
            \| \curl \bar\varphi\|_{H^1(\om^e)})
            && \text{by Theorem~\ref{thm:steingen}}\\
     & \le  C( \| \bar\varphi \|_{H^1(\tilde\om)} +  
            \| \tilde\theta\|_{H^1(\om^e)})
            && \text{by~\eqref{eq:19}}
           \\
    & \le C \| v \|_{\Hdiv\om}
    && \text{by~\eqref{eq:101} and~\eqref{eq:111}}.
  \end{align*}
  Thus $\varphi$ and $\phi$ satisfy all the properties stated in the
  theorem.

  The proof of the other decomposition~\eqref{eq:decomp2} is similar.
\end{proof}

The topological assumption that $\om$ is contractible is used only to
convey the simplicity of the idea of the proof. It is possible to
prove a more general version of the theorem, accounting for nontrivial
harmonic forms.  We conclude with the following remarks on the
applications of the above decomposition.

\begin{remark}[{\em Overlapping Schwarz preconditioner}]
  In~\cite{PasciZhao02} we find a decomposition similar
  to~\eqref{eq:decomp2} but for the case of boundary conditions on the
  entire boundary (i.e., the case $\gm=\d\om$). This is a critical
  ingredient in their proof that additive and multiplicative
  overlapping Schwarz algorithms give uniform preconditioners for the
  inner product in $H_0(\curl,\om)$, even on non-convex domains. Other
  related works that paved the way for~\cite{PasciZhao02}
  include~\cite{ArnolFalkWinth97,HiptmTosel00,VassiWang92}.  In
  particular~\cite{ArnolFalkWinth97,HiptmTosel00} proved the
  uniformity of the preconditioner in the convex domain case. These
  results were used in~\cite{GopalPasci03} to prove that the
  overlapping Schwarz algorithms give a uniform preconditioner for the
  indefinite time-harmonic Maxwell equations by a perturbation
  argument. In view of Theorem~\ref{thm:decomp}, one can now extend
  the results of~\cite{PasciZhao02} and~\cite{GopalPasci03} to the
  case of {\em Maxwell equations with mixed boundary conditions} on
  general domains.
\end{remark}

\begin{remark}
We note that decompositions similar to~\eqref{eq:decomp2} were also
used in the analysis of the {\em singular field method}
in~\cite[Proposition~5.1]{BonneHazarLohre99}, but again only for the
case of boundary conditions on the entire boundary.  
\end{remark}

\begin{remark}
  Another application of Theorem~\ref{thm:decomp} is in the {\em
    characterization of traces} on Lipschitz boundaries. A Hodge
  decomposition of the space of tangential traces, namely $\trct(
  H(\curl,\om,\RRR^3))$, is already known~\cite{BuffaCostaSheen02}
  (and such results are useful in the analysis of boundary element
  methods for Maxwell equations).  Theorem~\ref{thm:decomp} gives a
  decomposition (different from the Hodge decomposition) of the traces
  into a regular and a singular part. Specifically, taking the
  tangential trace of~\eqref{eq:decomp2}, any $v_\tau \in \trct(
  \hocurl\gm\om)$ can be decomposed into
\begin{equation}
  \label{eq:20}
v_\tau = \gradt \xi + \zeta_\tau  
\end{equation}
where $\xi \in H_{0,\gm}^1(\om, \RRR)$ and $ \zeta \in
H_{0,\gm}^1(\om, \RRR^3).$ The $\zeta_\tau$ part is regular, while the
singular part is entirely a surface gradient. Moreover, both
components of the decomposition vanish on $\Gamma$. Such
decompositions were used (albeit on the surface of a tetrahedron)
in~\cite{DemkoGopalSchob09a,DemkoGopalSchob09b}. A decomposition of
normal traces analogous to~\eqref{eq:20}, but using a surface curl,
also follows from  Theorem~\ref{thm:decomp} (using~\eqref{eq:decomp1}).  
\end{remark}

\begin{remark}
  A {\em right inverse of the divergence} operator, with mixed
  boundary conditions, is provided by Theorem~\ref{thm:decomp}.  To
  see this, first note that given any $z\in L^2(\om, \RRR)$, it is
  easy to see that there exists a $v$ in $\hdiv\om{\RRR^3}$ satisfying
\[
      \dive v = z, \quad
      \trcn(v)|_{\Gamma} = 0, \quad \text{and}\quad
      \| v \|_{L^2(\Omega)} \le C \| z \|_{L^2(\Omega)}
\]
(consider the solution of $\Delta \psi = z$ with mixed boundary
conditions $ (\d \psi/\d n)|_{\Gamma} =0$ and $\psi|_{\d\om\setminus
  \Gamma} = 0$ and set $v = \grad \psi$). When this $v$ is decomposed
using~\eqref{eq:decomp1}, the resulting $\phi$ has all
  its components in $H^1_{0,\gm}(\om)$ and satisfies
\begin{equation}
  \label{eq:21}
  \dive \phi = z, \quad \phi|_{\gm}=0, \quad \text{and}\quad
  \| \phi \|_{H^1(\om)} \le C \| z \|_{L^2(\Omega)}.
\end{equation}
Thus, the map $z\mapsto \phi$ is a regular right inverse of
divergence.  (Note that~\eqref{eq:21} can also be proved by other
methods.) Right inverses of divergence are fundamental in the study of
{\em Stokes flow}~\cite{Ladyz63,ScottVogel85a}. The above result implies
that the Stokes system with no-slip conditions only on
$\gm$ is well posed. Another application of~\eqref{eq:21} is in
proving the well-posedness of mixed formulations of  {\em linear elasticity
with weakly imposed symmetry}.  Under purely traction boundary
conditions or purely kinematic boundary conditions, a proof of
well-posedness can be found in~\cite{Falk08}. The same applies almost
verbatim for mixed boundary conditions, once~\eqref{eq:21} is used, in
place of the right inverse of divergence used there.
\end{remark}

\subsection{\Schoberl\ projectors with partial boundary conditions}  
\label{ssec:schob}

Projectors from Sobolev spaces into finite element spaces with optimal
approximation properties find many applications in finite elements.
It is well known that every finite element has a canonical projector
defined by its degrees of freedom, but this projection is often
unbounded in the natural Sobolev space where the solution is
sought. This problem was first overcome by the \Clement\
interpolant~\cite{Cleme75}. Although \Clement\ interpolation
yielded operators bounded just in the $L^2$-norm, it had
neither the projection property, nor the commutativity with the
exterior derivative important in analyses of mixed methods.
\Clement's idea was substantially generalized by \Schoberl\
in~\cite{Schob01,Schob08c,Schob08b} to obtain similar projectors with
the additional commutativity properties.  These developments are
reviewed in~\cite{GopalOh10} where \Schoberl's ideas were generalized
to weighted norms.  We refer to the operators obtained by his method
as \Schoberl\ projectors.  The importance of these projectors have
also been highlighted in a recent review~\cite{ArnolFalkWinth10} of
finite element exterior calculus. They called the projectors bounded
cochain projectors, because the spaces formed a cochain
complex. Another recent work which refined \Schoberl's ideas
is~\cite{ChrisWinth08}, where the operators were called smoothed
projectors.

However, all these recent works dealt either with the case of no
boundary conditions or the case of homogeneous boundary conditions on
the entire boundary.  With the help of Theorem~\ref{thm:expand}, it is
easy to generalize their arguments to obtain a \Schoberl\ projector
with partial boundary conditions (only on $\gm$). (Actually
in~\cite{Schob08b}, the partial boundary condition case is considered
under the tacit unverified assumption that a result like
Theorem~\ref{thm:expand} holds.)  We now, very briefly, discuss the
case of the projectors with vanishing traces on $\gm \subsetneq
\d\om$.

Let $\om$ be a polyhedron satisfying Assumption~\ref{asm:geom}, meshed
by a geometrically conforming tetrahedral finite element
mesh~$\TT_h$. Apply Theorem~\ref{thm:expand} to obtain the associated
$\om^e$ and $\tilde \om$. Assume that $\TT_h$ is quasiuniform of mesh
size~$h$. Corresponding to each mesh vertex $x$, we associate a
ball~$\omega_x$ of radius~$h\delta$, where $\delta>0$ is a
global parameter to be chosen shortly. For vertices $x$ in $\bar \gm$,
we choose $\omega_x$ to be centered at some $\tilde x$ satisfying
$|\tilde x - x| \le c h \delta$ (where $c$ is another globally fixed
constant) and
\[
\omega_x \subset \om^e.
\]
For all other vertices~$x$, the ball $\omega_x$ is centered at $x$.

Now, given $u \in H_{0,\gm}^1(\om,\RRR)$, $v\in \hocurl\gm\om$, and
$w\in \hodiv\gm\om$, we extend each by zero to $\om^e$ to obtain
$\tilde u, \tilde v,$ and $\tilde w$ in $\tilde \om$. 
By Theorem~\ref{thm:expand}, $\tilde \om$ is a Lipschitz
domain. Therefore, we can use the universal extension of
Theorem~\ref{thm:steingen} to extend these functions to all
$\RRR^3$. Let us denote these extended functions on $\RRR^3$ by $\hat
u, \hat v$ and $\hat w$, resp. We will also consider a function $z \in
L^2(\om)$ and its trivial extension (by zero) $\hat z$ to
$L^2(\RRR^3)$.

Following~\cite{Schob08b}, we now define the smoothing operators.  Let
$K\in \TT_h$ and $x\in K$. Denote by $a_i$ the vertices of $K$.  and
by $\lambda_i(x)$ the barycentric coordinates at~$x$.  Define $ \tilde
x \equiv \tilde x (x, y_1, y_2, y_3, y_4) \equiv \sum_{i=1}^4
\lambda_i(x) y_i$. Let $\omega = \omega_{a_1} \times \omega_{a_2}
\times \omega_{a_3} \times \omega_{a_4}$ and abbreviate the (12
dimensional) measure on this product domain to $dy = \dmsr.$ Let $f_i$
denote a function in $L^\infty(\omega_{a_i})$ such that
$\int_{\omega_{a_i}} f_i(y_i) p(y_i) \; dy_i = p(a_i)$ for all
polynomials of some fixed degree.  Then, setting $\kappa \equiv
\kappa(y_1,y_2,y_3,y_4) \equiv f_1(y_1) f_2 (y_2) f_3(y_3) f_4(y_4)$,
define
\begin{align*}
   S^g u(x) &= \int_\omega \kappa \;\hat u(\tilde x ) \; dy
   \\
   S^c v(x) &= \int_\omega \kappa\; 
   \left(\dfrac{d \xt}{d x}\right)^T
   \hat v (\tilde x) \; dy, 
   \\
   S^d w(x) &= \int_\omega \kappa \;
      \det\left(\dfrac{d\xt}{d x}\right)
      \left(\dfrac{d\xt}{d x}\right)^{-T}
      \hat w(\tilde x)\; dy
      \\
   S^o z(x) &= \int_\omega \kappa \;
      \det\left(\dfrac{d\xt}{d x}\right)
      \hat z (\tilde x)\; dy
\end{align*} 
for all $ x\in K$ and for each $K \in \TT_h$. 

Next, let $I_h^g, I_h^c, I_h^d,$ and $I_h^o$ denote the canonical
interpolation operators of the lowest order Lagrange $(U_h)$,
\Nedelec\ $(V_h)$, Raviart-Thomas $(W_h)$, and $L^2$-conforming $(Z_h)$ 
finite element
spaces. The \Schoberl\ quasi-interpolation operators are now defined
by
\[
R_h^i = I_h^i \circ S^i, \qquad \text{ for } i \in \{ g,c,d, o\}.
\]
One can then prove, as indicated in~\cite{Schob08c} (or see more
details in~\cite[Lemma~4.2]{GopalOh10}), that the operators norms $\|
I - R_h^i \|_{L^2(\om)} = O(\delta)$. Hence, choosing $\delta$
sufficiently small, the operator $R_h^i$ restricted to the finite element
subspace is invertible. Let the inverse be $J_h^i$. The \Schoberl\
projectors are defined by
\[
\varPi_h^i = J_h^i \circ R_h^i.
\]
As in~\cite{Schob08c,Schob08b} (or cf.~\cite[Theorem~5.1]{GopalOh10}),
one can then continue on to prove that these projectors are continuous in
the $L^2(\om)$-norm, satisfy the commuting diagram
\begin{equation*}
  \begin{CD}
    H_{0,\gm}^{1}(\om)
    @>\grad >>  
    \hocurl\gm\om
    @>\curl >>
    \hodiv\gm\om
    @>\dive >>
    L^{2}(\om)
    \\
    @VV \varPi^g_h V               
    @VV \varPi^c_h V
    @VV \varPi^d_h V 
    @VV \varPi^o_h V 
    \\
    U_h
    @>\grad >>  
    V_h
    @>\curl >>
    W_h
    @>\dive >>
    Z_h,
  \end{CD}
\end{equation*}
and yield optimal approximation error estimates. This completes our
brief sketch of the construction of \Schoberl\ projectors for the partial
boundary condition case.

\section{Proof of Theorem~\ref{thm:expand}}   \label{sec:proof}

This section is devoted to proving Theorem~\ref{thm:expand}. The idea
is to construct the extended domain by ``transporting'' the boundary
$\gm$ outward along a continuous transversal vector
field. Technicalities arise when one makes change of variables and 
exhibits coordinate directions with respect to which the 
protruded boundary
is a Lipschitz hypograph.

Let $\d D$ be a Lipschitz hypograph near $p$ in the $\hat
u$-direction, and let $\CC_{r,h}$, $\zeta$ and $r$ be as in
Definition~\ref{defn:Lip}.  We say that $M$ is the {\em Lipschitz
  constant} of the Lipschitz hypograph $\d D$ if
\begin{equation}
  \label{eq:10}
  | \zeta(x_1,x_2)  - \zeta(y_1,y_2) |\le M \| (x_1,x_2) - (y_1,y_2) \|_2
\end{equation}
for all $(x_1,x_2)$ and $(y_1,y_2)$ in $(-r,r)$, i.e., $M$ is the
Lipschitz constant of $\zeta$. (Above and throughout $\| \cdot\|_2$
denotes the Euclidean distance.) Let $\gamma_M$ denote the acute angle
such that 
\begin{equation}
  \label{eq:gammaM}
  \tan \gamma_M = M \qquad\qquad (\gamma_M < \frac{\pi }2).
\end{equation}
Suppose $A$ and $B$ are any two points on $\d D
\cap \CC_{r,h}$ (see Figure~\ref{fig:pert}).  Then, the line segment
$AB$ connecting them has slope bounded by $M$ (for all such $A$ and
$B$) if and only if~\eqref{eq:10} holds.

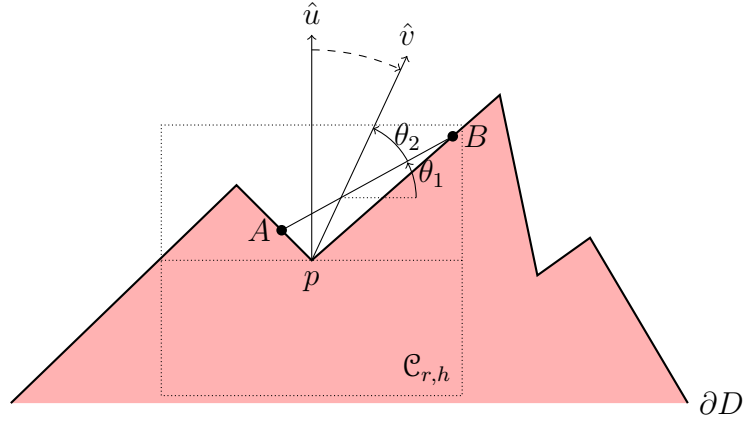
\begin{figure}
  \centering
  \begin{tikzpicture}
    \coordinate  (p) at (0,0);

    \coordinate  (p1) at (2.5,2.2);
    \coordinate  (p2) at (3,-0.2);
    \coordinate  (p3) at (3.7,0.3);
    \coordinate  (p4) at (5,-1.9);
    
    \coordinate  (q1) at (-1,1);
    \coordinate  (q2) at (-4,-1.9);
   
    \draw[thick,fill=red!30] (q2)--(q1)
             --(p) node [anchor=north] {$p$}
             --(p1)--(p2)--(p3)--(p4) node[anchor=west] {$\d D$};
  
    \coordinate [label=above:$\hat u$] (uh) at (0,3);
    \coordinate [label=above:$\hat v$] (vh) at (65:3);%(theta:r) specification
                                                      % polar coordinate 
    \draw[dashed,->] (0,2.8) arc (90:65:2.8);
    %\draw [->] (0.5,0)  node[anchor=south west] {$\theta$}  arc (0:65:0.5) ;

    \coordinate [label=left:$A$ ] (A) at ($(q1)!.6!(p)$);
    \coordinate [label=right:$B$] (B) at ($(p1)!.25!(p)$);
    \fill (A) circle (2pt);
    \fill (B) circle (2pt);

    \draw[->]  (p)--(uh);
    \draw[->]  (p)--(vh);
            
    \draw[densely dotted] (-2,0)--(2,0)   % coordinate box
                --(2,1.8)--(-2,1.8)
                --(-2,-1.8)--(2,-1.8)
                node [anchor=south east] {$\CC_{r,h}$} --(2,0);

    \draw (A)--(B);
    \coordinate (X)  at (intersection of A--B and p--vh);
    \coordinate (X1) at ($(X)+(1.0,0)$);
    \draw[densely dotted] (X)--(X1);
    \draw[->] (X1)  node[anchor=south] {\quad$\theta_1$} arc (0:28:1.0);
    \node (C) [circle through=(X1)] at (X) {};
    \coordinate (X2) at (intersection 1 of C and A--B);
    \draw[->] (X2) node[anchor=south] {$\theta_2$} arc (28:65:1.0) ;
  \end{tikzpicture}
  \caption{A 2D illustration.  Note that~\eqref{eq:10} is equivalent
    to $\theta_1 \le \gamma_M$.  % Also, in the 2D case,~\eqref{eq:11}
    % is equivalent to $\theta_1+\theta_2>\gamma_M$.
  }
  \label{fig:pert}
\end{figure}

\begin{lemma}[Perturbed coordinate direction]\label{lem:small_rotate}
  Suppose $\partial D$ is a Lipschitz hypograph near $p\in\partial D$
  in the $\hat{u}$-direction (and let $\gamma_M$ be as above). Let
  $\hat{v}$ be a unit vector such that
  \begin{equation}
    \label{eq:11}
     \sin\gamma_M   <\hat{u}\cdot\hat{v}.
  \end{equation}
  Then $\partial D$ is a Lipschitz hypograph near $p$ in the
  $\hat{v}$-direction.
\end{lemma}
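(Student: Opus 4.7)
Let $\theta=\arccos(\hat u\cdot\hat v)\in[0,\pi/2]$. The hypothesis rewrites as $\cos\theta>\cos(\pi/2-\gamma_M)$, so $\theta<\pi/2-\gamma_M$ and $\gamma_{M'}:=\gamma_M+\theta<\pi/2$; I will prove that $\partial D$ is a Lipschitz hypograph near $p$ in the $\hat v$-direction with Lipschitz constant at most $M':=\tan\gamma_{M'}$. The key slope estimate goes as follows. For any $A,B\in\partial D\cap\CC_{r,h}$, write $B-A=c_h+c_3\hat u$ with $c_h\perp\hat u$ and let $\phi\in[0,\gamma_M]$ be the acute angle with $|c_3|=|B-A|\sin\phi$ and $|c_h|=|B-A|\cos\phi$ (the bound $\phi\le\gamma_M$ comes from $|c_3|\le M|c_h|$). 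Decomposing $\hat v=v_h+(\hat u\cdot\hat v)\hat u$ with $|v_h|=\sin\theta$ and applying Cauchy--Schwarz to each piece gives
\[
|(B-A)\cdot\hat v|\;\le\;|c_h|\,|v_h|+|c_3|\,(\hat u\cdot\hat v)\;=\;|B-A|\sin(\theta+\phi)\;\le\;\sin\gamma_{M'}\,|B-A|.
\]

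Next, parametrize $\partial D\cap\CC_{r,h}$ by $\psi(x_1,x_2)=(x_1,x_2,\zeta(x_1,x_2))$ on $(-r,r)^2$ and form $\Phi:=\pi_{\hat v}\circ\psi$, where $\pi_{\hat v}$ is orthogonal projection onto $\hat v^\perp$. Combining the slope estimate with the Pythagorean identity and $|\psi(a)-\psi(b)|\ge\|a-b\|$ yields
\[
|\Phi(a)-\Phi(b)|^2\;=\;|\psi(a)-\psi(b)|^2-\bigl((\psi(a)-\psi(b))\cdot\hat v\bigr)^2\;\ge\;\cos^2\gamma_{M'}\,\|a-b\|^2,
\]
so $\Phi$ is bi-Lipschitz onto its image; by Brouwer's invariance of domain, that image is open in $\hat v^\perp\cong\RRR^2$ and contains $\Phi(0,0)=0$. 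Introduce new orthonormal coordinates $(y_1,y_2,y_3)$ centered at $p$ with $\hat v$ as the $y_3$-axis, identify $\hat v^\perp$ with the $(y_1,y_2)$-plane, pick $r'>0$ so that $(-r',r')^2$ lies inside that image, and set $\eta(y_1,y_2):=\psi(\Phi^{-1}(y_1,y_2))\cdot\hat v$; the slope estimate makes $\eta$ Lipschitz with constant $M'$ and $\eta(0,0)=0$. After a further shrink of $r'$, pick $h'$ so that $|\eta|<h'$ on $(-r',r')^2$ and $\CC_{r',h'}\subset\CC_{r,h}$.

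Since $\CC_{r',h'}\subset\CC_{r,h}$, every point of $\partial D\cap\CC_{r',h'}$ equals $\psi(x_1,x_2)$ for some $(x_1,x_2)$, so in new coordinates it is $(y_1,y_2,\eta(y_1,y_2))$ with $(y_1,y_2)\in(-r',r')^2$; thus $\partial D\cap\CC_{r',h'}$ is exactly the graph of $\eta$. For each fixed $(y_1,y_2)\in(-r',r')^2$ the $\hat v$-parallel segment in $\CC_{r',h'}$ crosses $\partial D$ only at $y_3=\eta(y_1,y_2)$, so its two open halves each lie entirely in $D$ or in $\overline{D}^{c}$. To decide which, given $q=\psi(x_1,x_2)$ and small $s>0$, the Lipschitz bound on $\zeta$ together with $\sqrt{1-(\hat u\cdot\hat v)^2}=\sin\theta$ gives
\[
\zeta\bigl((q+s\hat v)_1,(q+s\hat v)_2\bigr)-(q+s\hat v)_3\;\le\;s\bigl(M\sin\theta-\hat u\cdot\hat v\bigr),
\]
which is strictly negative because $\hat u\cdot\hat v>\sin\gamma_M=M/\sqrt{1+M^2}$ is equivalent to $\hat u\cdot\hat v>M\sin\theta$; hence $q+s\hat v\in D$, so $\{y_3>\eta\}=\CC_{r',h'}\cap D$ and $\{y_3<\eta\}=\CC_{r',h'}\cap\overline{D}^{c}$. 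The hardest piece is the open-image step, since $\zeta$ is only Lipschitz and the implicit function theorem is unavailable; the bi-Lipschitz lower bound with positive constant $\cos\gamma_{M'}$ together with invariance of domain is exactly what makes the new coordinate box available, and the sharpness of the hypothesis $\hat u\cdot\hat v>\sin\gamma_M$ is what pins down the sign in the hypograph identification.
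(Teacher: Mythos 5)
Your argument is correct and rests on the same key estimate the paper uses: a chord of $\partial D$ makes an angle with $\hat v^{\perp}$ at most $\gamma_{M'}=\gamma_M+\theta<\pi/2$, so its $\hat v$-component is bounded by $\sin\gamma_{M'}$ times its length, which is exactly the paper's $\theta_2\ge\utheta>0$ expressed as a projection inequality. The paper essentially stops at that angle bound and asserts the conclusion; you carry the same approach through to completion by constructing the new coordinate box explicitly (bi-Lipschitz projection $\Phi$ onto $\hat v^{\perp}$, invariance of domain to get an open image, a Lipschitz graph function $\eta$ with constant $\tan\gamma_{M'}$, and the sign computation showing $\hat u\cdot\hat v>\sin\gamma_M\iff\hat u\cdot\hat v>M\sin\theta$ that places $D$ above the graph), so this is a more careful rendition of the paper's proof rather than a different route.
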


\begin{proof}
  Let $\theta$ and $\theta_1$ denote the acute angles such that
  $\sin\theta = \hat u \cdot \vh$ and $\tan \theta_1$ equals the slope of
  $AB$, respectively (considering any two points $A$ and $B$ as
  mentioned above -- see also Figure~\ref{fig:pert}).  Let $\theta_2$ denote the smaller of the angles
  that $AB$ makes with $\vh$.  Then~\eqref{eq:11} implies $\gamma_M <
  \theta \le \theta_1 + \theta_2$ while~\eqref{eq:10} implies
  $\theta_1 \le \gamma_M$.  Hence $\theta_2 > \gamma_M - \theta_1 \ge
  0$, i.e., there is a $\utheta>0$ such that $\theta_2\ge \utheta$
  for all $A$ and $B$ in a neighborhood of $p$.  This implies that
  $\partial D$ is still a Lipschitz hypograph near $p$ in the
  direction of $\hat{v}$ with Lipschitz constant $\tan ( \pi/2
  - \utheta)$.
\end{proof}

Next, let us recall the well known fact that a domain is Lipschitz if
and only if it satisfies the uniform cone property, which we now
state.

\begin{definition}[Cone property]
\label{ConeProperty}
Let $D$ be an open subset of $\mathbb{R}^{3}$. We say that $D$ has the
{\em cone property at $p\in \d D$ in the direction $\hat u$} if there
are (i)~new coordinates $(y_1,y_2,y_3)$, where the $y_3$-direction is
$\hat u$, (ii)~a hypercube
\begin{equation*}
  V  =\{(y_{1},y_{2},y_{3}):-a_{j}\leq y_{j}\leq a_{j},1\leq j\leq 3\},
\end{equation*}
and~(iii)~constants $\theta\in (0,\pi/2)$ and $h>0$, and a
corresponding open cone $K_{\theta,h,\hat u}=\{y: (y_{1}^2 +
y_{2}^2)^{1/2} < y_3(\tan\theta)< h(\tan \theta) \}$, such that 
\[
y-z \in D\text{ whenever }
y \in \overline{D}\cap V
\text{ and }z \in K_{\theta,h,\hat u}.
\]
We say that $D$ has the \emph{uniform cone property} if every point
$p$ on $\d D$ satisfies the cone property (in some direction) with the
same $\theta$ and $h$.
\end{definition}

This definition can be found in~\cite[Definition~$1.2.2.1$]{Grisv85},
and so can the following theorem. The statement of the theorem
in~\cite[Theorem~1.2.2.2]{Grisv85} assumes boundedness
of~$D$. Following that proof we however find that boundedness of $D$
is unnecessary. We only need to assume that $\partial D$ is compact.

\begin{theorem}[see~Theorem~1.2.2.2 in~\cite{Grisv85}]
\label{thm:cone}
Let $D$ be an open subset of $\RRR^3$ with compact boundary $\d D$.
It has the cone property in $\hat u$-direction at some $p\in
\d D$ if and only if $\partial D$ is a Lipschitz hypograph in the
$\hat u$-direction near $p$. Moreover, $D$ is Lipschitz if and only if
$D$ has the uniform cone property.
\end{theorem}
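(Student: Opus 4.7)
My plan is to establish the pointwise equivalence first—namely, that $\partial D$ is a Lipschitz hypograph near $p$ in the $\hat u$-direction if and only if $D$ has the cone property at $p$ in the $\hat u$-direction—and then deduce the uniform/global equivalence by compactness of $\partial D$. Throughout, I would work in the orthonormal coordinates in which $\hat u = (0,0,1)$ and $p = 0$.

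For the direction Lipschitz hypograph $\Rightarrow$ cone property, suppose $\partial D \cap \CC_{r,h}$ is the graph of a Lipschitz function $\zeta$ with constant $M$, and let $\gamma_M$ be as in \eqref{eq:gammaM}. I would choose the half-opening angle $\theta$ of the cone so that $\tan\theta < 1/M$, so that every ray from the origin inside $K_{\theta,h,\hat u}$ makes a steeper angle with the $x_1 x_2$-plane than any chord of the graph of $\zeta$; this is exactly the threshold that appears in Lemma~\ref{lem:small_rotate}. With $h$ and the hypercube $V$ sized appropriately inside $\CC_{r,h}$, direct verification using the Lipschitz estimate \eqref{eq:10} then shows that the cone-property conclusion $y - z \in D$ holds for every $y \in \bar D \cap V$ and every $z \in K_{\theta,h,\hat u}$.

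The converse direction, cone property $\Rightarrow$ Lipschitz hypograph, is the substantive step. Working in the cone-property coordinates, I would construct the candidate graph function by setting $\zeta(x_1, x_2)$ equal to the unique value of $t$ (with $|t|$ below a threshold comparable to $h$) for which $(x_1, x_2, t) \in \partial D$. Existence of such a $t$ for each $(x_1, x_2)$ in a small square around the origin follows by tracking how the translated cones $y - K_{\theta,h,\hat u}$, as $y$ ranges over $\bar D \cap V$, sweep out an entire neighborhood of $p$ on the $D$-side and force every central vertical line to cross $\partial D$. Uniqueness follows because, if two boundary points $(x_1, x_2, t_1)$ and $(x_1, x_2, t_2)$ coexisted on the same vertical line, then their vertical separation would be a vector lying in $\pm K_{\theta,h,\hat u}$, and the cone property applied at the appropriate endpoint would place the other in the interior of $D$, contradicting its membership in $\partial D$. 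For Lipschitz continuity, given two boundary points $q_1, q_2 \in \partial D$ near $p$, the cone property applied at each precludes the other from lying in the translated cone at the first; translated into coordinates, these two exclusions together yield $|\zeta(\xi) - \zeta(\eta)| \le \cot\theta\,\|\xi - \eta\|_2$.

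The ``moreover'' statement then follows from compactness of $\partial D$—which is exactly the hypothesis of the theorem, weaker than Grisvard's original boundedness of $D$, and explains why Grisvard's proof carries over verbatim. If $D$ is Lipschitz, cover $\partial D$ by finitely many coordinate boxes from Definition~\ref{defn:Lip}, use the first direction in each, and take the minimum of the resulting $(\theta, h)$ pairs to produce uniform cone parameters. Conversely, the uniform cone property supplies a Lipschitz hypograph representation at every point of $\partial D$ with a single constant $\cot\theta$, so $D$ is Lipschitz. The main obstacle in all of the above is in the cone-to-hypograph direction: arranging that the candidate $\zeta$ is defined and single-valued on a full base rectangle $[-r,r]^2$ of positive uniform size. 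The cone property supplies a single cone that can be subtracted at every point of $\bar D \cap V$, but extracting from this a bona fide graph requires choosing the dimensions of $V$, $h$, and the future base radius $r$ in compatible ratios so that the swept cones actually cover the required vertical segments over the base rectangle. This geometric tuning is the one place where real care is required; the rest of the argument is a repackaging of elementary Lipschitz–trigonometric estimates of the kind already in use in the paper.
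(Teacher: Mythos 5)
The paper does not supply a proof here: it cites Grisvard's Theorem~1.2.2.2 directly and merely remarks that, because Grisvard's argument only ever uses finitely many coordinate neighborhoods covering the boundary, compactness of $\partial D$ can replace boundedness of~$D$. Your sketch is thus going beyond what the paper does, but it correctly reconstructs the standard Grisvard-type argument that the paper is implicitly invoking: the hypograph-to-cone direction by comparing the cone's half-angle $\theta$ to $\gamma_M$ (the same slope-comparison trick as in Lemma~\ref{lem:small_rotate}); the cone-to-hypograph direction by defining $\zeta$ as the unique height at which a vertical line meets $\partial D$, with single-valuedness and the Lipschitz bound $|\zeta(\xi)-\zeta(\eta)|\le\cot\theta\,\|\xi-\eta\|_2$ both coming from the mutual cone exclusions; and the global equivalence from a finite cover of $\partial D$. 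Your closing observation about tuning the base rectangle to make $\zeta$ globally single-valued over $[-r,r]^2$ is indeed where the honest bookkeeping lives, and your remark that compactness of $\partial D$ is the real hypothesis is precisely the paper's point. Nothing in your outline contradicts what the paper invokes; the only caveat is that a complete proof would also need to verify all three set-theoretic identities in Definition~\ref{defn:Lip} (for $D$, $\partial D$, and $\overline D^{\,c}$), not merely that $\partial D$ is the graph of $\zeta$, but you are clearly aware of this.
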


% Recall that the ``direction'' of a Lipschitz hypograph is as in
% Definition~\ref{defn:Lip}.

\begin{lemma}[Combination of directions] \label{lem:spin}%
  Let $\nu$ denote the unit outward normal on $\d D$ at $p \in \d D$.
  If $\partial D$ is a Lipschitz hypograph near some $p\in \d D$ in
  both the directions $\hat{u}$ and $\hat{v}$, and
  \begin{equation}
    \label{eq:15}
      \hat{u}\cdot\nu>\kappa\quad\text{and}\quad\hat{v}\cdot\nu>\kappa,
  \quad\text{ for some } \kappa>0,
 \end{equation}
 a.e.~on a neighborhood of $p$ on $\partial D$, then
  $\partial D$ is also a Lipschitz hypograph near $p$ in the direction
  $\hat{w}=a\hat{u}+b\hat{v}$ for any $a,b>0$.
\end{lemma}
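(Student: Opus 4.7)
My plan is to use the equivalence, furnished by Theorem~\ref{thm:cone}, between the Lipschitz hypograph property at a point and the cone property at that point: I will produce a cone for $D$ at $p$ in the direction of $\hat w/\|\hat w\|_2$ by combining the two cones coming from the hypotheses, and then invoke Theorem~\ref{thm:cone} once more in the reverse direction to conclude.

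From Theorem~\ref{thm:cone} applied to each hypothesis, I first obtain open cones $K_u = K_{\theta_u,h_u,\hat u}$ and $K_v = K_{\theta_v,h_v,\hat v}$ together with a common open hypercube $V$ around $p$ (after intersecting and shrinking the two hypercubes) such that, for every $y\in\bar D\cap V$, $y - z_u \in D$ for all $z_u\in K_u$ and $y - z_v \in D$ for all $z_v\in K_v$. The hypothesis~\eqref{eq:15} ensures that $\hat w\cdot\nu > (a+b)\kappa > 0$ a.e.\ in a neighborhood of $p$; in particular $\hat w\neq 0$, so $\hat w/\|\hat w\|_2$ is a well-defined direction, and $\hat u\cdot\hat v > 2\kappa^2-1$, so $\hat u$ and $\hat v$ are uniformly non-antiparallel.

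Next I construct a candidate cone $K_w = K_{\theta_w,h_w,\hat w/\|\hat w\|_2}$. The axis splits exactly, since $s\,\hat w/\|\hat w\|_2 = (sa/\|\hat w\|_2)\hat u + (sb/\|\hat w\|_2)\hat v$ for every $s>0$, and this sum lies in $K_u + K_v$ as long as $s$ is smaller than $\min(h_u\|\hat w\|_2/a,\; h_v\|\hat w\|_2/b)$. For a general $z\in K_w$ obtained by perturbing the axis by a small transverse vector $\delta$, I would split $\delta = \delta_u + \delta_v$ using a basis adapted to $\{\hat u,\hat v\}$ (well-conditioned thanks to $\hat u\cdot\hat v > 2\kappa^2-1$) and set $z_u = (sa/\|\hat w\|_2)\hat u + \delta_u$, $z_v = (sb/\|\hat w\|_2)\hat v + \delta_v$. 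Choosing $\theta_w$ and $h_w$ small enough depending on $\theta_u,\theta_v,a,b$ and $\kappa$, I ensure that $z_u\in K_u$ and $z_v\in K_v$.

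The cone property in direction $\hat w/\|\hat w\|_2$ then follows quickly. Pick a smaller cube $V'\subset V$ so that $y - z_u\in V$ whenever $y\in V'$ and $|z_u|\le h_u$. For any $y\in\bar D\cap V'$ and any $z = z_u + z_v \in K_w$, the $\hat u$-cone property gives $y' := y - z_u \in D\subset\bar D$ and $y'\in V$, after which the $\hat v$-cone property applied at $y'$ yields $y - z = y' - z_v \in D$. Theorem~\ref{thm:cone} then hands back a Lipschitz hypograph representation of $\partial D$ near $p$ in the direction $\hat w/\|\hat w\|_2$, which is the same as being a Lipschitz hypograph in the direction $\hat w$. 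The main technical obstacle is the construction of the decomposition $z = z_u + z_v$ above: quantitatively calibrating $\theta_w$ and $h_w$ so that the perturbed components genuinely land in $K_u$ and $K_v$, and ensuring their norms are small enough for the two cone properties in the final step to apply in the same hypercube. This is exactly where~\eqref{eq:15} is essential — without the uniform lower bound $\kappa$, the directions $\hat u$ and $\hat v$ could approach antiparallel configurations, causing the splitting to become singular.
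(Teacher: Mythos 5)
Your proof is correct, but it uses a genuinely different mechanism from the paper's. The paper also passes through Theorem~\ref{thm:cone}, but then argues by contradiction combined with a transversality observation: assuming the cone property in the $\hat w$-direction fails, it extracts a sequence of shrinking cones $y_n - K_{\theta_n,h_n,\hat w}$ with $\theta_n,h_n\to 0$ and vertices $y_n\to p$ that are not contained in $D$, and then notes that since $\hat u$ and $\hat v$ (and hence $\hat w$, because $a,b>0$) point to the interior side of the hypograph by~\eqref{eq:15}, any sufficiently thin and short cone along $\hat w$ attached to a point of $\bar D$ near $p$ must in fact fall inside~$D$ --- a contradiction. Your argument instead constructs the cone directly: you split each $z\in K_{\theta_w,h_w,\hat w}$ as $z=z_u+z_v$ with $z_u\in K_u$, $z_v\in K_v$, and chain the two cone applications $y-z_u\in D$ and $(y-z_u)-z_v\in D$; in effect you exhibit the containment $K_{\theta_w,h_w,\hat w}\subset K_u+K_v$ (for calibrated $\theta_w,h_w$) and never need a quantitative transversality estimate for~$\hat w$. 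Two small remarks on the write-up. First, the transverse perturbation $\delta\perp\hat w$ need not lie in $\spn(\hat u,\hat v)$, so ``a basis adapted to $\{\hat u,\hat v\}$'' is not quite the right device; the symmetric split $\delta_u=\delta_v=\delta/2$ works and requires no conditioning bound, so the inequality $\hat u\cdot\hat v>2\kappa^2-1$ plays no essential role. Second, in both your proof and the paper's, the role of~\eqref{eq:15} is just to ensure that $\hat w\ne 0$ and that $\hat u$, $\hat v$, $\hat w$ all point to the same side of~$\d D$, not to keep the splitting well-conditioned.
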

\begin{proof}
  We use Theorem~\ref{thm:cone}. By reducing, if necessary, the
  neighborhoods given by the cone property in $\hat u$ and $\hat v$
  directions, we can find a single hypercube $V$ containing $p$ such
  that for any $q\in\overline{D} \cap V$, we have $ q -
  K_{\theta,h,\hat u} \subset D$ and $ q - K_{\theta,h,\hat v} \subset
  D.$ Let $\CC_{r,h}$ be the coordinate box at $p$ in the $\hat
  u$-direction.  We choose $V$ so small that the sets $q -
  K_{\theta,h,\hat u}$, $q - K_{\theta,h,\hat v}$, and $q -
  K_{\theta,h,\hat w}$ are all contained in $\CC_{r,h}$ for all $q \in
  \bar D \cap V$. 
% The Lipschitz hypograph in the box $\CC_{r,h}$
%   naturally divides it into an ``inner part'' $\CC_{r,h}\cap D$ and a
%   remaining ``outer part''.

  We now claim that $D$ satisfies the cone property in the $\hat
  w$-direction at $p$.  If not, then by Definition~\ref{ConeProperty},
  for every neighborhood $V_n$, cone angle $\theta_n = \theta/n$, and
  cone height $h_n = h/n$, there is a point $y_n \in V_n
  \cap\overline{D}$ such that $y_n- K_{\theta_n,h_n,\hat w} \nsubseteq
  D$. Here we choose $V_n$ to be a hypercube of side-lengths $1/n$
  centered at $p$.  Thus, there exists $q_n$ in $y_n -
  K_{\theta_n,h_n,\hat w}$ such that
  \begin{equation}
    \label{eq:17}
    q_n \not\in D,
  \end{equation}
  and $q_n$ converges to $p$.

  Now, by~\eqref{eq:15}, $\hat{u}$ and $\hat{v}$ point above the
  hypograph in $\CC_{r,h}$, and since $a,b>0$,
  so does $\hat{w}$.  Since the heights and angles of the cones $y_n -
  K_{\theta_n,h_n,\hat w}$ approach $0$ and since their vertices
  approach $p$, we find that for sufficiently large $n$, the cone $y_n
  - K_{\theta_n,h_n,\hat w} \subseteq \CC_{r,h} \cap D$ and $q_n \in
  D$.  But this is in contradiction with~\eqref{eq:17}.
\end{proof}

\begin{lemma}[Separation of transported points]
\label{lem:inverse}
Let $u = (u_1,u_2,u_3)$. Suppose $u_j$ and $\zeta$ are Lipschitz
functions on $(-r,r)^2$ for some $r>0$,  $u_1(0)=u_2(0)=0$, 
and $u_3(0)\ne 0$. Let $(y_1,y_2)$ be mapped to
\[
L (y_1,y_2,s) = (y_1,y_2,\zeta(y_1,y_2)) + s \,u(y_1,y_2).
\]
Then there exists $0<r_{0}<r$ and $C>0$ (depending on $\zeta$ and $u$)
such that
\[
\|  L(y_1,y_2,s) - L(z_1,z_2,t)  \|_2 \ge C 
\| (y_1,y_2,s) - (z_1,z_2,t)  \|_2
\]
for all $(y_{1},y_{2},s)$ and $(z_{1},z_{2},t)$ in the cube
$(-r_{0},r_{0})^3$.
\end{lemma}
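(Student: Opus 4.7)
The plan is to compare $L$ with a simpler ``frozen coefficient'' reference map
\[
L_0(y_1,y_2,s) = (y_1,\; y_2,\; \zeta(y_1,y_2)) + s\,(0,0,u_3(0,0)),
\]
obtained by replacing $u(y_1,y_2)$ with its value at the origin. The proof proceeds in three steps: show $L_0$ is globally bi-Lipschitz; show the remainder $L - L_0$ has arbitrarily small Lipschitz constant on $(-r_0,r_0)^3$ once $r_0$ is small; and combine the two via the reverse triangle inequality.

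For the first step, set $c = u_3(0,0) \neq 0$ and let $a = \|(y_1,y_2)-(z_1,z_2)\|_2$, $b = |s-t|$. Then
\[
L_0(y_1,y_2,s) - L_0(z_1,z_2,t) = \bigl(y_1-z_1,\; y_2-z_2,\; \zeta(y_1,y_2)-\zeta(z_1,z_2)+c(s-t)\bigr),
\]
whose squared norm is at least $a^2$ and also at least $(|c|b - Ma)^2$, where $M$ is the Lipschitz constant of $\zeta$. A case split on whether $|c|b \le 2Ma$ or $|c|b > 2Ma$ shows that in the first case $a^2$ already dominates $a^2+b^2$ up to a constant depending on $|c|/M$, while in the second the third component alone is at least $|c|b/2$. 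Hence $L_0$ is globally bi-Lipschitz with a constant $C_0>0$ depending only on $|c|$ and $M$.

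For the second step, note that
\[
(L-L_0)(y_1,y_2,s) = \bigl(s\,u_1(y_1,y_2),\; s\,u_2(y_1,y_2),\; s\,(u_3(y_1,y_2)-u_3(0,0))\bigr).
\]
Because $u_1(0,0)=u_2(0,0)=0$ and $u$ is Lipschitz, each component of $L-L_0$ is a product of two Lipschitz factors (one proportional to $s$, of size $O(r_0)$, the other a function of $(y_1,y_2)$ vanishing at the origin, also of size $O(r_0)$). Splitting each cross-term via identities like $s\,u_1(y) - t\,u_1(z) = s\,(u_1(y)-u_1(z)) + (s-t)\,u_1(z)$ and estimating each piece in terms of $a$ and $b$, one finds that the Lipschitz constant of $L-L_0$ on $(-r_0,r_0)^3$ is bounded by a constant multiple of $r_0$.

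Choosing $r_0$ small enough that the Lipschitz constant of $L-L_0$ is at most $C_0/2$, the reverse triangle inequality applied to $L = L_0 + (L-L_0)$ yields
\[
\|L(y_1,y_2,s)-L(z_1,z_2,t)\|_2 \ge \tfrac{C_0}{2}\,\|(y_1,y_2,s)-(z_1,z_2,t)\|_2,
\]
which is the desired bound. The main obstacle is the bi-Lipschitz estimate for $L_0$: even after freezing $u$ at the origin, the third component retains the non-affine $\zeta$-term coupled to the first two variables, and must be absorbed against the $c(s-t)$ piece via the case analysis above. The rest is routine perturbation in the spirit of the inverse function theorem.
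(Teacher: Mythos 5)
Your proposal is correct, and it takes a genuinely different route from the paper. The paper works directly with $L$ itself: it sets up a handful of componentwise inequalities and then argues by a single case split on whether $|s-t|$ is small or large relative to $\|(y_1,y_2)-(z_1,z_2)\|_2$, reading the lower bound off the first two components of $L$ in one case and the third component in the other. You instead factor $L=L_0+(L-L_0)$, where $L_0$ freezes $u$ at the origin, and cast the result as a perturbation of a bi-Lipschitz reference map: the case split is confined to proving bi-Lipschitzness of the structurally simple $L_0$ (whose only coupling is the $\zeta$-term in the third slot), while the remainder is shown to have Lipschitz constant $O(r_0)$ because every component of $L-L_0$ is a product of two factors that each vanish at the origin and each contribute a factor of $r_0$. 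The paper's argument is more hands-on and produces explicit constants (e.g.\ the $3/4-\sqrt{2}/4$ factor in Case~2); yours is more modular, cleanly separates the structural part from the small perturbation, and would transfer without change to the $n$-dimensional generalization noted in Remark~3.7, since the reference map $L_0$ keeps the same block structure in any dimension. Both are complete proofs of the lemma.
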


\begin{proof}
  Let $M$ be the maximum of the Lipschitz constants of $u_j$ and
  $\zeta$. Then, denoting by $[\cdot]_j$ the $j$th component, we have,
  for small enough $|s|$,
  \begin{subequations}
    \label{eq:1}
  \begin{align}
    \label{eq:L3}
    \big|
      \left[ L(y_1,y_2,s) - L(z_1,z_2,s) \right]_3
    \big|
    & \le\; 2 M 
    \| (y_{1},y_{2}) - (z_{1},z_{2}) \|_2,
    \\
    \label{eq:L12}
    \|   \ell(y_1,y_2,s) - \ell(z_1,z_2,s) \|_2
    & \ge
    (3/4)     \| (y_{1},y_{2}) - (z_{1},z_{2}) \|_2,
 \end{align}
  where $l(y_1,y_2,s) = ( [L(y_1,y_2,s)]_1, [L(y_1,y_2,s)]_2)$. 
  Let $c_0 = \vert u_3(0)\vert/16M>0$.
  Since 
  \[
  \| u(z_1,z_2) - (0,0,u_3(0) )\|_2^2 \le 3M^2\| (z_1,z_2)\|_2^2,
  \]
  for small enough $\| (z_1,z_2) \|_2$, we have
  \begin{align}
    \label{eq:u3z}
    \vert (s-t)\, u_{3}(z_{1},z_{2})\vert
    & \geq
    \frac 3 4 \vert u_{3}(0)\vert \, \vert s-t\vert,
    \\
    \label{eq:ujz}
    \big| (s-t)\, u_j(z_1,z_2)\big|
    & \leq
    c_0 \vert s-t\vert, \text{ for } j=1,\text{ and } 2,
  \end{align}
  for any $s,t\in (-r,r)$.
\end{subequations}
In view of these, there exists an $0<r_{0}<r$ such that all the
inequalities of~\eqref{eq:1} hold for any $(y_{1},y_{2},s)$ and
$(z_{1},z_{2},t)$ in $(-r_{0},r_{0})^{3}$.  The remainder of the proof
splits into two cases:

{\em Case~1:} $\vert s-t\vert\leq \| (y_1,y_2) -
(z_1,z_2)\|_2/(2\sqrt{2}c_0)$.  In this case,~\eqref{eq:ujz} implies
\begin{align}
\label{ineq_2_lemma_prime}
\vert (s-t)\, u_j(z_{1},z_2) \vert
\leq  \frac 1 {2\sqrt{2}}   \| (y_1,y_2) - (z_1,z_2) \|_2.
\end{align}
Then,
we have
\begin{align*}
  \| L(y_1,y_2,s) & - L(z_1,z_2,t)\|_2
  \ge
  \| l(y_1,y_2,s) - l(z_1,z_2,t)\|_2
  \\
  & 
  \ge 
  \| l(y_1,y_2,s) - l(z_1,z_2,s) \|_2
  - 
  \| l(z_1,z_2,s) - l(z_1,z_2,t) \|_2
  \\
  & 
  \ge
  ( \frac 3 4 - \frac 1 2 )
  \| (y_1,y_2) - (z_1,z_2) \|_2, 
\end{align*}
by~\eqref{eq:L12} and~\eqref{ineq_2_lemma_prime}.  This proves the
result in Case~1.

{\em Case~2:} $\vert s-t\vert \ge \| (y_1,y_2) - (z_1,z_2)\|_2
/(2\sqrt{2}c_0)$. We now estimate using the last component of $L$,
namely
\begin{align*}
   \| L(y_1,y_2,s) & - L(z_1,z_2,t)\|_2
  \ge
  \big| [ L(z_1,z_2,t) - L(y_1,y_2,s)]_3 \big|
  \\
  & \ge
   \big| [L(z_1,z_2,t) - L(z_1,z_2,s)]_3
   \big|
   -
   \big| [L(z_1,z_2,s) - L(y_1,y_2,s)]_3
   \big|
   \\
   & 
   =  |(s-t) u_3(z_1,z_2) | -    \big| [L(z_1,z_2,s) - L(y_1,y_2,s)]_3
   \big| 
\end{align*}
Now, by~\eqref{eq:L3},  and the inequality of Case~2, we have
\begin{align*}
\big| [L(z_1,z_2,s) - L(y_1,y_2,s)]_3   \big|
&
\le 2M     \| (y_{1},y_{2}) - (z_{1},z_{2}) \|_2
% \\
% &
\le  4  \sqrt{2} M c_0 |s-t|.
%= \frac{     \sqrt{2}   \vert u_3(0)\vert}{ 4} |s-t|
\end{align*}
Hence, using~\eqref{eq:u3z} and the definition of $c_0$, we obtain 
\begin{align*}
  \| L(y_1,y_2,s) & - L(z_1,z_2,t)\|_2
  \ge
  ( 
    \frac 3 4  - \frac{ \sqrt{2} }{4}
  )   | u_3(0)| \; |s-t|.
\end{align*}
Using the inequality of Case~2 to bound $|s-t|$ from below again, we
finish the proof.
\end{proof}

\begin{remark}
  \label{rem:leminvgeneral}
  Note that Lemma~\ref{lem:inverse} and its proof in fact holds more
  generally in $n+1$~space dimension, for any $n\ge 1$, for maps
  \[
  L (y_1,\ldots,y_n, s) = 
  (y_1,\ldots,y_n,\zeta(y_1,\ldots,y_n)) 
  +
  s \,u(y_1,\ldots,y_n)
  \]
  satisfying $u_j(0)=0$ for all $j=1,\ldots, n,$ but $u_{n+1}(0) \ne 0$.
  We only described it above for the $n=2$ case for simplicity. In the
  remainder of the paper, we will use it with $n=1$ and $2$.
\end{remark}

Now, let $D$ be a Lipschitz domain in $\mathbb{R}^3$.  Then there
exist finitely many coordinate boxes $\{\CC_{r_{j},h_{j}}\}_{j=1}^{N}$
such that
\begin{equation}
  \label{eq:3}
\partial
D\subset \mathop{\cup}_{j=1}^{N}\CC_{r_{j},h_{j}}.  
\end{equation}
Moreover, there
exists~\cite{McLea00} a partition of unity $\{\psi_{j}\}_{j=1}^{N}$
subordinate to $\{\CC_{r_{j},h_{j}}\}_{j=1}^N$ with
$\sum_{j=1}^{N}\psi_{j}=1$ on $\partial D$. We denote by $\nu_{j}$ the
direction of $\CC_{r_{j},h_{j}}$ for any $1\leq j\leq N$ (where the
``direction'' of the box is as in Definition~\ref{defn:Lip}).
We define a vector field 
\begin{subequations}
  \label{eq:2}
\begin{equation}
\hat{v}^{\prime}:=\sum_{j=1}^{N}\psi_{j}\nu_{j}.
\label{global_vector_field} 
\end{equation}
Then $\hat{v}^{\prime}$ is nonzero on $\partial D$.  Hence, we may
normalize and define
\begin{equation}
\hat{v}:=\frac{\hat{v}^{\prime}}{\vert\hat{v}^{\prime}\vert}.
\label{global_unit_vector_field} 
\end{equation}  
\end{subequations}
The following lemma proves that this yields a continuous transversal
vector field with an additional property we shall need later.  The
arguments are standard (see e.g.,~\cite{Grisv85,HofmaMitreTaylo07})
and we give the proof only for completeness.

\begin{lemma}[Transversal vector field]
\label{lem:transvers}
Let $D$ be Lipschitz.  The unit vector field $\hat v$ defined
by~\eqref{global_unit_vector_field} satisfies the following
properties:
\begin{enumerate}

\item It is tranversal, i.e., if $\nu$ is the outward unit normal on
  $\d D$, there is a constant $\kappa>0$ such that
  $\hat{v}\cdot\nu>\kappa$ a.e.~on $\d D$.

\item If $p_i$'s are the exceptional points in the piecewise $C^1$
  dissection (see Definition~\ref{defn:diss}), then
  $\hat v$ is constant on a neighborhood of each $p_i$.

\item \label{item:transverse3} In a neighborhood of any point $p\in \d
  D$, the boundary is a Lipschitz hypograph in the $\hat
  v(p)$-direction.
\end{enumerate}
\end{lemma}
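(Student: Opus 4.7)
The plan is to verify the three properties in order, each following fairly readily from the preceding lemmas, provided the covering $\{\CC_{r_j,h_j}\}$ and subordinate partition of unity $\{\psi_j\}$ are arranged with some care.

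For transversality~(1), I would start from the observation that whenever $p\in\mathrm{supp}(\psi_j)$, the point $p$ lies in $\CC_{r_j,h_j}$, so in the local coordinates of that box $\partial D$ is the graph of a Lipschitz function $\zeta_j$ with some constant $M_j$. At almost every such point the outward unit normal takes the form $\nu=(-\partial_1\zeta_j,-\partial_2\zeta_j,1)/\sqrt{1+|\nabla\zeta_j|^2}$ in those coordinates, so $\nu_j\cdot\nu\ge 1/\sqrt{1+M_j^2}$. Setting $\kappa_0=\min_j 1/\sqrt{1+M_j^2}$ and using $\sum_j\psi_j(p)=1$, I would obtain $\hat v'(p)\cdot\nu(p)\ge\kappa_0$ a.e.\ on $\partial D$. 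Since $|\hat v'|\le\sum_j\psi_j=1$, normalization only improves the bound, giving~(1) with $\kappa=\kappa_0$.

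For constancy near the exceptional points~(2), I would exploit the freedom in choosing the covering. The plan is, for each exceptional point $p_i$, to pick a distinguished coordinate box $\CC^{\star}_i$ centered at $p_i$ in some direction $\nu^{\star}_i$ coming from the local Lipschitz regularity; then cover the remaining compact set $\partial D\setminus\bigcup_i W_i$ (where $W_i$ is a small open neighborhood of $p_i$) by additional coordinate boxes whose radii are shrunk so that none of them meets a smaller neighborhood $W_i'\subset W_i$. A standard partition of unity construction subordinate to this refined cover then yields $\psi_{j^{\star}_i}\equiv 1$ and $\psi_j\equiv 0$ for $j\ne j^{\star}_i$ on $W_i'$, so $\hat v'\equiv\nu^{\star}_i$ and hence $\hat v$ is constant on $W_i'$.

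For the hypograph property in direction $\hat v(p)$~(3), I would fix $p\in\partial D$ and note that only finitely many of the $\psi_j$, say $\psi_{j_1},\ldots,\psi_{j_k}$, are nonzero at $p$; by continuity these are the only ones nonzero on some neighborhood of $p$. On that neighborhood $\partial D$ is a Lipschitz hypograph in each of the directions $\nu_{j_1},\ldots,\nu_{j_k}$, and (1) supplies the transversality bound required by Lemma~\ref{lem:spin}. I would then apply Lemma~\ref{lem:spin} inductively, first combining $\nu_{j_1}$ and $\nu_{j_2}$ with positive weights $\psi_{j_1}(p),\psi_{j_2}(p)$, then combining the resulting direction (whose dot product with $\nu$ remains bounded below by $\kappa_0$ because $|a\hat u+b\hat v|\le a+b$, so the required transversality carries through) with $\nu_{j_3}$, and so on. After $k-1$ steps the hypograph property holds in the direction of $\sum_\ell\psi_{j_\ell}(p)\nu_{j_\ell}=\hat v'(p)$, i.e.\ in the direction $\hat v(p)$.

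The genuinely delicate step is~(2): the other two parts are essentially algebraic consequences of the graph form of $\partial D$ and of Lemma~\ref{lem:spin}, whereas (2) requires an explicit construction of the covering and partition of unity tailored to the finitely many singular points of $\Pi$ while still producing a valid partition of $\partial D$. I would therefore carry out~(2) first (to fix the cover), and then read off (1) and~(3) from the resulting $\hat v$.
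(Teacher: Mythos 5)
Your proposal is correct and follows essentially the same route as the paper's proof: transversality is read off from the local Lipschitz graph representation in each coordinate box, constancy near each $p_i$ is arranged by tailoring the cover so that a single $\psi_j$ is active there, and item~(3) is obtained from Lemma~\ref{lem:spin}. The only difference is that you spell out the iterated application of Lemma~\ref{lem:spin} (with the observation that $|a\hat u + b\hat v| \le a+b$ preserves the transversality bound under normalization), which the paper leaves implicit by stating that item~(3) follows immediately.
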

\begin{proof}
  In each of the coordinate boxes in~\eqref{eq:3}, there is a
  $\kappa_j>0$ such that $\nu_{j}\cdot\nu>\kappa_{j}$. Set $\kappa =
  \min_j \kappa_j$. Then 
  \[
  \hat v' \cdot \nu = \sum_{j=1}^N \psi_j \nu_j \cdot \nu 
  \ge \kappa \sum_{j=1}^N \psi_j = \kappa
  \]
  a.e.~on $\d D$.

  To ensure that $\hat v$ is constant in the neighborhood of each
  $p_i$, we choose the covering of boxes in~\eqref{eq:3} as
  follows. We first select the coordinate boxes around each
  $p_i$. Next, we construct an open cover for the remainder such that
  the distances between $p_i$ and the open sets of this covering are
  bounded away from 0. A covering of $\d D$ is obtained by the union
  of this cover and the coordinate boxes of $p_i$.  Then we use a
  partition of unity subordinate to this union to
  define~\eqref{eq:3}. Then, only one of the summands
  in~\eqref{global_vector_field} is nonzero (say the $j$th) in a
  neighborhood of $p_i$ and $\psi_j$ is constant there.

  The final statement, item~(\ref{item:transverse3}), immediately
  follows from Lemma~\ref{lem:spin}.
\end{proof}

\begin{remark}
  \label{rem:LipschitzTrans}
  Although $\hat v'$ is obviously a smooth vector field in a
  three-dimensional neighborhood of $\d D$, below we will need to use
  the two-dimensional restriction $\breve v \equiv \hat v|_{\d D}$, which
  is {\em not} smooth, in general. Indeed, in each coordinate box where $\d
  D$ takes the form $(x_1,x_2, \zeta(x_1,x_2)),$ this vector field on
  $\d D$ is
  \[
  \breve v (x_1,x_2) = \hat v (x_1,x_2, \zeta(x_1,x_2) ),
  \]
  which only has Lipschitz regularity. To avoid proliferation of
  notations, we will avoid using $\breve v$ and continue to denote
  various restrictions of $\hat v$ by $\hat v$ itself.
\end{remark}
Using the above defined $\hat v$, we can transport the entire boundary
$\d D$ to create a new domain. Namely, define 
\begin{equation}
  \label{eq:5}
\Sigma_{t^{-},t^{+}}:=\{p+s\hat{v}(p):p\in\partial D,\;  t^{-}<s<t^{+}\},
\quad \text{ for any } t^{-}<t^{+}.  
\end{equation}

\begin{lemma}[Expansion of the entire boundary]
\label{lem:WholeExpansion}
Let $D$ be a Lipschitz domain and $\hat{v}$ be as above.
Then there exists $t_{0}>0$ such that:
\begin{enumerate}
\item \label{item:1}
  For any $p,q\in\partial D$ and any $s_{1},s_{2}\in [-t_0,t_0]$, 
  \[
  p+s_{1}\hat{v}(p)\neq
  q+s_{2}\hat{v}(q)\qquad\text{ whenever }
  (p,s_1)\neq (q,s_2).
  \]
  
\item \label{item:2} Let $-t_{0}\leq t^{-}<t^{+}\leq t_{0}$.  For any
  $q\in\partial\Sigma_{t^{-},t^{+}}$, the boundary
  $\partial\Sigma_{t^{-},t^{+}}$ is a Lipschitz hypograph near $q$ in
  the direction $\hat{v}(p)$.  Moreover,
  $\partial\Sigma_{t^{-},t^{+}}$ is compact. Consequently
  $\Sigma_{t^{-},t^{+}}$ is a Lipschitz domain.
\end{enumerate}
\end{lemma}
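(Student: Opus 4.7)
My plan is to localize both claims at each $p_0 \in \d D$, use Lemma~\ref{lem:inverse} as the fundamental bi-Lipschitz tool, and patch via compactness of $\d D$. At $p_0$ I choose new orthogonal coordinates $(x_1,x_2,x_3)$ so that the new $x_3$-axis aligns with $\hat v(p_0)$; by Lemma~\ref{lem:transvers}(\ref{item:transverse3}) the boundary $\d D$ is still a Lipschitz hypograph $x_3 = \zeta(x_1,x_2)$ near $p_0$ in the new coordinates, and $\hat v$ has coordinate expression $u = (u_1,u_2,u_3)$ with $u_1(0)=u_2(0)=0$, $u_3(0)=1$, and each $u_j$ Lipschitz (from the construction of $\hat v$ via a smooth partition of unity). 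Thus the map $L(y_1,y_2,s) = (y_1,y_2,\zeta(y_1,y_2)) + s\,u(y_1,y_2)$ satisfies the hypotheses of Lemma~\ref{lem:inverse}, yielding constants $r_0(p_0),C(p_0)>0$ for which $L$ is bi-Lipschitz on $(-r_0,r_0)^3$. Finitely many such neighborhoods cover the compact set $\d D$, producing uniform constants and a single $t_0>0$.

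For item~(\ref{item:1}), the bi-Lipschitz estimate from Lemma~\ref{lem:inverse} directly forbids $(p,s_1)\ne(q,s_2)$ from producing $p+s_1\hat v(p)=q+s_2\hat v(q)$ whenever $p,q$ lie in a common local neighborhood with $|s_1|,|s_2|<t_0$. To obtain the global conclusion I argue by contradiction: if it fails there are sequences $(p_n,s_n^1)\ne (q_n,s_n^2)$ with $s_n^i\to 0$ and $p_n+s_n^1\hat v(p_n) = q_n+s_n^2\hat v(q_n)$; compactness of $\d D$ yields limits $p_n\to p_*$, $q_n\to q_*$, and these must coincide (because $s_n^i\hat v\to 0$ forces $p_n-q_n\to 0$), so for $n$ large both $p_n,q_n$ lie in one local neighborhood of $p_*$, contradicting local injectivity.

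For item~(\ref{item:2}), each $q\in \d\Sigma_{t^-,t^+}$ has the form $q = p+s\hat v(p)$ for some $p\in \d D$ and $s\in \{t^-,t^+\}$. In the local coordinates above, the component of $\d\Sigma_{t^-,t^+}$ through $q$ is parameterized by $(y_1,y_2)\mapsto L(y_1,y_2,s)$. I then show this surface is a Lipschitz hypograph in the direction $\hat v(p_0)=\hat u$ by projecting onto the $(x_1,x_2)$-plane: the planar map
\[
\ell_s(y_1,y_2) = (y_1+s\,u_1(y_1,y_2),\; y_2+s\,u_2(y_1,y_2))
\]
is bi-Lipschitz on $(-r_0,r_0)^2$ for $|s|\le t_0$ (this is exactly inequality~(\ref{eq:L12}) from the proof of Lemma~\ref{lem:inverse}, or a direct near-identity estimate using $u_1(0)=u_2(0)=0$). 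Inverting $\ell_s$ expresses the surface as the graph $x_3 = \zeta(\ell_s^{-1}(x_1,x_2)) + s\,u_3(\ell_s^{-1}(x_1,x_2))$, a Lipschitz function of $(x_1,x_2)$, which is the desired hypograph representation. Compactness of $\d\Sigma_{t^-,t^+}$ follows since it is the continuous image of the compact set $\d D\times\{t^-,t^+\}$ under $(p,s)\mapsto p+s\hat v(p)$, and so $\Sigma_{t^-,t^+}$ is Lipschitz.

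The chief obstacle is verifying that the image of the face $\{s'=s\}$ under $L$ coincides with the \emph{full} boundary of $\Sigma_{t^-,t^+}$ near $q$, i.e.\ that no stray lateral piece or self-intersection intrudes. This uses item~(\ref{item:1}) (global injectivity rules out self-intersections) together with the bi-Lipschitz property of $L$ applied on the open slab (which sends the slab homeomorphically onto an open neighborhood of $q$ in $\overline{\Sigma_{t^-,t^+}}$, so its boundary face must locally exhaust $\d\Sigma_{t^-,t^+}$). The exceptional points $p_i$ of the dissection cause no extra difficulty because by Lemma~\ref{lem:transvers}(2), $\hat v$ is constant near each $p_i$, so $L$ there reduces to a rigid translation composed with the graph map, for which the above argument is immediate.
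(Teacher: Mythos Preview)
Your proof is correct and follows essentially the same route as the paper's: local bi-Lipschitzness via Lemma~\ref{lem:inverse}, globalized by a compactness-and-contradiction argument for item~(\ref{item:1}), and for item~(\ref{item:2}) the planar projection $\ell_s$ (the paper calls it $X$) inverted to exhibit the transported surface as a Lipschitz graph in the $\hat v(p)$-direction. One small remark: the exceptional points $p_i$ belong to the dissection of Definition~\ref{defn:diss} and play no role in this lemma, so the final sentence of your proposal is unnecessary here.
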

\begin{proof}
  Let $p \in \d D$ and let $\CC_{r,h}$ be the coordinate box in the
  $\hat v(p)$-direction near $p$. There is a Lipschitz function $\zeta$
  such that $\partial D$ around $p$ can be parameterized by the
  mapping $(x_{1},x_{2})\rightarrow (x_{1},x_{2},\zeta(x_{1},x_{2}))$
  in orthogonal coordinates $(x_{1},x_{2},x_{3})$.
 
  {\em Case~1:} We first show that the item~(\ref{item:1})
  holds, {\em assuming} that $q\in \d D$ is in the same coordinate box
  $\CC_{r,h}$ as $p$.  Then, since $\zeta$ and $\hat v$ are Lipschitz
  functions of $x_1,x_2$, we can apply Lemma~\ref{lem:inverse} (with
  $u$ there set to $\hat v$) to get that the distance between
  $p+s_{1}\hat{v}(p)$ and $q+s_{2}\hat{v}(q)$ is bounded below by $C
  \| (p,s_1) - (q,s_2) \|_2$. Hence this distance cannot be zero.

  {\em Case~2:} Now we prove the item~(\ref{item:1}) in general.
  If the result is not true, then for any $t^{-}<t^{+}$, there exists
  $p,q\in\partial D$ and $s,t\in [t^{-},t^{+}]$ such that
  $p+s\hat{v}(p)=q+t\hat{v}(q)$ and $p\neq q$. (If $p=q$, then we fall
  into Case~1 and the proof is done.)  This implies (choosing $t_\pm =
  \pm 1/i$), that there are
  $\{p_i\}_{i=1}^{\infty},\{q_i\}_{i=1}^{\infty}\in\partial D$ and
  $s_i,t_i\in [-1/i, 1/i]$ such that $p_i\neq q_i$ for any $i$ and
  $p_i+s_i\hat{v}(p_i)=q_i+t_i\hat{v}(q_i)$. Since $\partial D$ is
  compact, a subsequence of $\{(p_i,q_i)\}_{i=1}^{\infty}$
  converges. This implies (since $s_i,t_i\rightarrow 0$) that there
  exists an~$i$, large enough, such that $p_{i}$ and $q_i$ are in the
  same coordinate box.  Then, by Case~1, it is impossible that
  $p_i+s_i\hat{v}(p_i)=q_i+t_i\hat{v}(q_i)$, a contradiction.

  To prove the second item, we start by noting that by virtue of the
  first item, any point on $\d \Sigma_{t^{-},t^{+}}$ can be written
  (uniquely) as $p+ t_0 \hat v (p)$ for some $p \in \d D$. With
  $\CC_{r,h}$ and $ (x_{1},x_{2},\zeta(x_{1},x_{2}))$ as in the
  beginning of this proof, define new coordinates
  \[
  X(x_1,x_2) = 
  \begin{bmatrix}
    X_1 \\ X_2 
  \end{bmatrix}
  \equiv
  \begin{bmatrix}
    x_1 + t_0 \hat v_1 (x_1,x_2)\\
    x_2 + t_0 \hat v_2 (x_1,x_2)
  \end{bmatrix}
  \]
  Clearly, by choosing $t_0$ small enough, we can ensure that 
  \begin{equation}
    \label{eq:7}
      \| X(x_1,x_2) - X(y_1,y_2) \|_2 
      \ge C 
      \|(x_1,x_2) - (y_1,y_2) \|_2,
  \end{equation}
  so the inverse map $T = X^{-1}$ exists, i.e., $T(X_1,X_2) =
  (x_1,x_2)$.

  To prove that $\d \Sigma_{t^{-},t^{+}}$ is a Lipschitz hypograph
  near $p+ t_0 \hat v (p)$, we now only need to show that the third
  component $Z = \zeta(x_1,x_2) + t_0 \hat v_3 (x_1,x_2)$ is a
  Lipschitz function of the new coordinates $X_1$ and $X_2$. In these
  variables,
  \begin{equation}
    \label{eq:8}
      Z = \zeta \circ T + t_0 \hat v_3 \circ T.
  \end{equation}
  Since $\zeta$ and $\hat v_3$ are Lipschitz, and since $T$ is
  Lipschitz by~\eqref{eq:7}, we conclude from~\eqref{eq:8} that $Z$ is
  also Lipschitz, hence $\d \Sigma_{t^{-},t^{+}}$ is a Lipschitz
  hypograph in the $\vh$-direction.  Obviously, $\d \Sigma_{t_-,t_+}$
  is also compact.
\end{proof}

Next, given a $C^1$ dissection $\partial D =
\Gamma_1\cup\Pi\cup\Gamma_2$ of a Lipschitz boundary $\d D$, we
define, for $t\in \RRR$, the following sets of transported points
\begin{align*}
  S_t & :=\{p+t\hat{v}(p):\forall p\in\partial D\},
  &
  S_{1,t}& :=\{p+t\hat{v}(p):\forall p\in\Gamma_1\},
  \\
  \Pi_{t} & :=\{p+t\hat{v}(p):\forall p\in\Pi\},
  &
  S_{2,t} & :=\{p+t\hat{v}(p):\forall p\in\Gamma_2\}.
\end{align*}

\begin{lemma}[Transported dissection] \label{lem:ClosedCurve} 
  Let
  $\partial D = \Gamma_1\cup\Pi\cup\Gamma_2$ be a piecewise $C^{1}$
  dissection of a Lipschitz $\partial D$ and let $\hat{v}$ be
  transversal field of Lemma~\ref{lem:transvers}.  Then, there exists
  $t_0>0$ such that for any $-t_0\leq t\leq t_0$,
  $S_t=S_{1,t}\cup\Pi_{t}\cup S_{2,t}$ is a piecewise $C^{1}$
  dissection of $S_t$. The only points where $\Pi_t$ is not $C^{1}$ are
  $\{p_i+t\hat{v}(p_i)\}_{i=1}^{m}$. 
  % More precisely, for any
  % $p\in\Pi$, there is coordinate box $\CC_{r,h}$ near $p$ in the
  % direction $\hat{v}(p)$ such that $\Pi_{t}$ is piecewise $C^{1}$
  % around $p+t\hat{v}(p)$ with respect to $\CC_{r,h}$ for any
  % $-t_{0}\leq t\leq t_{0}$.
\end{lemma}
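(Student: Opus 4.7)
The plan is to verify, at each point $q=p+t\hat v(p) \in S_t$, the three requirements of Definition~\ref{defn:diss}: disjointness of $S_{1,t}, S_{2,t}$ with common boundary $\Pi_t$; a local Lipschitz hypograph representation of $S_t$ in which $\Pi_t$ is a graph $X_2 = \tilde\varrho(X_1)$ over $X_1$; and $C^1$ regularity of the induced parameterization of $\Pi_t$ away from the transported exceptional points. The disjointness is immediate from the injectivity statement in Lemma~\ref{lem:WholeExpansion}(\ref{item:1}), which makes the transport $p\mapsto p+t\hat v(p)$ a homeomorphism onto its image and hence carries the dissection $\partial D = \Gamma_1\cup\Pi\cup\Gamma_2$ to a disjoint decomposition of $S_t$ with $\Pi_t$ as common boundary. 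Lemma~\ref{lem:WholeExpansion}(\ref{item:2}) already supplies the Lipschitz hypograph structure of $S_t$ in the direction $\hat v(p)$.

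Next I set up local coordinates. At a non-exceptional $p\in\Pi$, I choose coordinates $(x_1,x_2,x_3)$ adapted to $\hat v(p)=(0,0,1)$, using Lemma~\ref{lem:transvers}(\ref{item:transverse3}) (i.e.\ Lemma~\ref{lem:small_rotate}) to ensure $\partial D$ is still the graph $x_3=\zeta(x_1,x_2)$ and $\Pi$ retains a graph representation $x_2=\varrho(x_1)$ over $x_1$ with $\varrho$ Lipschitz; this rotation preserves the $C^1$ regularity of the parameterization away from the $p_i$'s by Lemma~\ref{lem:transvers}(2) combined with the fact that $\hat v$ is a $C^1$ vector field in $\mathbb R^3$ (Remark~\ref{rem:LipschitzTrans}). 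In these coordinates $\hat v_1(p) = \hat v_2(p) = 0$ and $\hat v_3(p)=1$.

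With this setup I mimic the proof of Lemma~\ref{lem:WholeExpansion}: define
\[
X_1 = x_1 + t\hat v_1(x_1,x_2,\zeta(x_1,x_2)),\qquad X_2 = x_2 + t\hat v_2(x_1,x_2,\zeta(x_1,x_2)),
\]
which by inequality~\eqref{eq:7} (for small $|t|$) is bi-Lipschitz, so $S_t$ is the graph $X_3=\tilde\zeta(X_1,X_2)$ with $\tilde\zeta$ Lipschitz. The transported curve $\Pi_t$ is then the image of $x_1\mapsto (\phi_t(x_1),\psi_t(x_1))$ with $\phi_t(x_1)=x_1+t\hat v_1(x_1,\varrho(x_1),\zeta(x_1,\varrho(x_1)))$ and $\psi_t(x_1)=\varrho(x_1)+t\hat v_2(x_1,\varrho(x_1),\zeta(x_1,\varrho(x_1)))$. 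Since $\hat v_1(p)=0$, the Lipschitz estimate $|\hat v_1(\cdot)-\hat v_1(p)|\le L|\cdot -p|$ (which is the $n=1$ instance of Remark~\ref{rem:leminvgeneral}) gives $|\phi_t(x_1)-\phi_t(y_1)|\ge(1-Lt)|x_1-y_1|$ for small $t$, so $\phi_t$ is bi-Lipschitz. Setting $\tilde\varrho(X_1)=\psi_t(\phi_t^{-1}(X_1))$ yields the sought Lipschitz graph representation $\Pi_t=\{X_2=\tilde\varrho(X_1)\}$ on $S_t$. For the $C^1$ claim, the original parameterization $x_1\mapsto(x_1,\varrho(x_1),\zeta(x_1,\varrho(x_1)))$ is $C^1$ except at the $p_i$, and $\hat v$ is $C^1$ on $\mathbb R^3$ by Remark~\ref{rem:LipschitzTrans}; composing and inverting via the $C^1$ map $\phi_t$ shows the new parameterization is $C^1$ except at the transported points $p_i+t\hat v(p_i)$. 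At those exceptional points themselves, Lemma~\ref{lem:transvers}(2) guarantees that $\hat v$ is constant on a neighborhood of each $p_i$, so the transport acts as a rigid translation there and exactly reproduces the singular structure of $\Pi$ at $p_i$ as a singular structure of $\Pi_t$ at $p_i+t\hat v(p_i)$.

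The main obstacle I anticipate is the coordinate alignment step in the second paragraph: verifying that after rotating local coordinates to the $\hat v(p)$-direction the curve $\Pi$ still admits a graph representation $x_2=\varrho(x_1)$ over $x_1$ of the type demanded by Definition~\ref{defn:diss}. This is essentially a transversality question — it requires that the rotation angle controlled by Lemma~\ref{lem:small_rotate} is strictly smaller than the angle under which $\Pi$ remains a graph over the first coordinate axis — and will need the compactness of $\Pi$ together with piecewise $C^1$-ness (plus a uniform lower bound on the relevant angle away from the $p_i$, handled separately by Lemma~\ref{lem:transvers}(2)).
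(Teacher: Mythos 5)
Your framework—use Lemma~\ref{lem:WholeExpansion} for the Lipschitz structure of $S_t$, treat exceptional and non-exceptional points of $\Pi$ separately, handle $p_i$ by rigid translation since $\hat v$ is locally constant there—matches the paper at those pieces. The injectivity argument for the disjoint decomposition and the rigid-translation argument at the $p_i$ are fine.

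The gap is precisely the one you flag at the end, and it is not a detail but the crux. Definition~\ref{defn:diss} gives you the graph representation $x_2=\varrho(x_1)$ of $\Pi$ only in \emph{the} coordinate box supplied by the Lipschitz regularity at $p$; it does not assert that $\Pi$ retains such a representation in every coordinate frame in which $\partial D$ happens to be a Lipschitz hypograph. After you rotate $x_3$ to the direction $\hat v(p)$, Lemma~\ref{lem:small_rotate} guarantees only that $\partial D$ is still a hypograph — it says nothing about $\Pi$. Whether the rotated projection of $\Pi$ onto the new $(x_1,x_2)$-plane is still a graph over the new $x_1$-axis is governed by the Lipschitz constant of $\varrho$, which is a different quantity from the Lipschitz constant $\tan\gamma_M$ of $\zeta$ that controls the admissible rotation in Lemma~\ref{lem:small_rotate}. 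There is no reason the rotation compatible with the hypograph property stays inside the smaller cone compatible with $\Pi$'s graph structure, and your appeal to compactness plus a ``uniform lower bound on the relevant angle'' is asserted, not proved.

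The paper avoids this problem entirely by never attempting to propagate the original graph representation of $\Pi$ through the rotation. Instead, at a non-exceptional $p$ it first observes that $\Pi_t$ is already a $C^1$ curve near $p+t\hat v(p)$ (image of a $C^1$ curve under the $C^1$ map $q\mapsto q+t\hat v(q)$), takes its tangent vector $w$ there, and \emph{constructs} the new coordinate box so that $x_3=\hat v(p)$ and $x_1$ is the projection $w-(w\cdot\hat v(p))\hat v(p)$. With the $x_1$-axis aligned to the tangent, the $x_1$-component of the velocity of $\Pi_t$ is nonzero at $p+t\hat v(p)$, so $\Pi_t$ is automatically a $C^1$ graph over the new $x_1$-axis. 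This replaces your transversality question with a tautology. If you adopt this coordinate choice, the rest of your argument closes, and you can drop the $\phi_t,\psi_t$ machinery in the third paragraph, which was only needed because you were trying to reuse the old $x_1$-coordinate.
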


\begin{proof}
  By Lemma~\ref{lem:WholeExpansion}, there is a $t_0$ such that for all
  $t\in [-t_0,t_0]$, the surface $S_{t}$ is a Lipschitz hypograph at
  $p+t\hat{v}(p)$ in the $\hat{v}(p)$-direction for any $p\in\partial
  D$. 

  To show that $\Pi_t$ is piecewise $C^1$ dissection, we first
  consider the exceptional points $p_i$ of $\Pi$. The coordinate boxes
  near $p_i$ may simply be translated to form coordinate boxes around
  $p_i+t\hat{v}(p_i)$ because $\hat v$ is a constant vector field near
  $p_i$ (by Lemma~\ref{lem:transvers}). Since $\Pi_t$ is a merely a
  translation of $\Pi$, there is nothing to prove at these points.

  Next, consider the remaining $p \in \Pi$. In a neighborhood of
  $p + t \hat v (p)$, the transported curve $\Pi_t$ is $C^1$. This is
  because $\hat v|_\Pi$ is $C^1$ (and $\hat v$ is globally smooth), so
  $\Pi_t$ is locally the image of a $C^1$ curve under a $C^1$ map.
  Next, let $w$ denote the tangent vector of $\Pi_t$ at $p + t\hat
  v(p)$.  We construct a coordinate box around the $p+t\hat{v}(p)$ as
  follows: The $x_3$-direction is provided by $\hat v(p)$.  The
  $x_1$-direction is provided by the projection $\hat u = w - (w \cdot
  \hat v(p) ) \hat v(p)$ (and the $x_2$-direction is then
  determined). Then (because $\Pi_t$ can be locally parametrized using
  its tangent $w$) it is easy to see that $\Pi_t$ can be parametrized
  using a $C^1$ function of the new $x_1$~coordinate.
\end{proof}

Now, we are ready to define the partial expansion we need.  Let
$\partial D = \Gamma_1\cup\Pi\cup\Gamma_2$ be a piecewise $C^{1}$
dissection of a Lipschitz $\partial D$ and $\hat v$ be the vector
field of Lemma~\ref{lem:transvers}, define
\begin{equation}
  \label{eq:6}
  D_{t}^{e} =\{p+s\hat{v}(p):p\in\Gamma_1,s\in (0,t)\}.  
\end{equation}
Additionally let 
\[
\Psi_t =\{p+s\hat{v}(p):p\in\Pi,0<s<t\}, \qquad
\Gamma_{1,t} =\{p+t\hat{v}(p):p\in\Gamma_1\}.
\]
Clearly, for any $0<t\leq t_0$, $D_{t}^{e}$ is an open subset of
$\mathbb{R}^3$ and $\partial D_{t}^{e}
=\Gamma_1\cup\Gamma_{1,t}\cup\Psi_t\cup\Pi\cup\Pi_{t}$.

\begin{lemma}[The protrusion is Lipschitz]
  \label{lem:bulge}
  Let $D$ be a Lipschitz domain and let $\partial D =
  \Gamma_1\cup\Pi\cup\Gamma_2$ be a piecewise $C^{1}$
  dissection. Then, there exists $t_0>0$ such that for any $0<t\leq
  t_0$, the domain $D_{t}^{e}$ in~\eqref{eq:6} is a Lipschitz domain.
\end{lemma}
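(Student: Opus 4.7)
The plan is to choose $t_0>0$ small enough that all preceding lemmas of this section (notably the separation Lemma~\ref{lem:inverse}, the whole-expansion Lemma~\ref{lem:WholeExpansion}, and the transported-dissection Lemma~\ref{lem:ClosedCurve}) apply simultaneously, and then to verify the Lipschitz hypograph property at each $q\in\d D_t^e=\Gamma_1\cup\Gamma_{1,t}\cup\Psi_t\cup\Pi\cup\Pi_t$ by splitting into five cases according to where $q$ lies.

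First, for $q$ in the relative interior of $\Gamma_1$ as a subset of $\d D$, the boundary $\d D_t^e$ coincides with $\d D$ in a small neighborhood of $q$, so the Lipschitz hypograph property is inherited directly from $D$. For $q$ in the relative interior of $\Gamma_{1,t}$, applying Lemma~\ref{lem:WholeExpansion} to the slab $\Sigma_{0,t}$ shows that $\d D_t^e$ coincides with an open subset of $S_t$ near $q$ and is a Lipschitz hypograph in the direction $\hat v(p)$, where $p\in\Gamma_1$ is the unique preimage. For an interior point $q=p+s\hat v(p)$ of $\Psi_t$ with $p$ in the $C^1$ part of $\Pi$ and $0<s<t$, I parametrize $\Psi_t$ near $q$ by $\Phi(x_1,s')=\gamma(x_1)+s'\hat v(\gamma(x_1))$, where $\gamma$ is a local $C^1$ parametrization of $\Pi$ through $p$; the one-dimensional version of Lemma~\ref{lem:inverse} (see Remark~\ref{rem:leminvgeneral}) shows that $\Phi$ is bi-Lipschitz onto its image. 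Choosing an orthonormal frame at $q$ with one axis normal to the plane $\mathrm{span}(\gamma'(x_1^*),\hat v(p))$ and invoking Lemma~\ref{lem:small_rotate} to absorb Lipschitz perturbations, I can represent $\Psi_t$ locally as a Lipschitz graph in that direction.

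The two remaining cases are the edge points. For a non-exceptional $q\in\Pi$, the surface $\d D_t^e$ locally consists of two Lipschitz faces, $\Gamma_1$ and $\Psi_t$, meeting along a $C^1$ arc of $\Pi$. I pick local coordinates with $x_1$ along the tangent of $\Pi$ at $q$ and $x_3$ chosen transversal to \emph{both} faces; the existence of such a common transversal direction is secured by Lemma~\ref{lem:spin} applied to the original coordinate direction for $\Gamma_1$ and the direction obtained in the previous step for $\Psi_t$. Each face then becomes a Lipschitz graph $x_3=f_\pm(x_1,x_2)$ over a half-box $\pm x_2>0$, with $f_-$ and $f_+$ agreeing on $\{x_2=0\}=\Pi$, so the piecewise defined function $f=f_-\chi_{\{x_2\le 0\}}+f_+\chi_{\{x_2\ge 0\}}$ is Lipschitz on the whole box. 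The case $q\in\Pi_t$ is analogous, using the $C^1$ regularity of $\Pi_t$ supplied by Lemma~\ref{lem:ClosedCurve}. Finally, at an exceptional point $p_i$ (or $p_i+t\hat v(p_i)$), Lemma~\ref{lem:transvers} gives that $\hat v$ is constant in a neighborhood of $p_i$, so $D_t^e$ is locally the Minkowski sum of a neighborhood of $p_i$ in $\overline{\Gamma_1}$ with the segment $[0,t]\hat v(p_i)$, and a single change of variables analogous to the one used in the proof of Lemma~\ref{lem:WholeExpansion} yields the Lipschitz graph representation.

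The principal obstacle I anticipate is the edge case on $\Pi$: one must verify that the two adjacent faces $\Gamma_1$ and $\Psi_t$ are Lipschitz hypographs in a single common coordinate direction, so that the two local graphs can be concatenated into one. This requires combining the original coordinate direction for $\d D$ near $q$ (suited to $\Gamma_1$) with a direction normal to the tangent plane of $\Psi_t$ (suited to $\Psi_t$) via Lemmas~\ref{lem:small_rotate} and~\ref{lem:spin}. The piecewise $C^1$ regularity of $\Pi$ is essential here so that $x_1$ can be aligned with its tangent, and the bi-Lipschitz estimate of Lemma~\ref{lem:inverse} is what guarantees that the parametrization of $\Psi_t$ inverts onto a full $(x_1,x_2)$-rectangle.
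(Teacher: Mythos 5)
Your case breakdown correctly mirrors the structure of the proof (interior points of $\Gamma_1$, of $\Gamma_{1,t}$, of $\Psi_t$, and the edge curves $\Pi$, $\Pi_t$), and your treatment of the first two cases is exactly right. The handling of interior points of $\Psi_t$ is roughly on track as well, though the paper's actual route is to apply Lemma~\ref{lem:inverse} only to the $(X_1,X_3)$-components of the map~\eqref{eq:12}, invert, and express $X_2$ as a Lipschitz function of $(X_1,X_3)$; your phrase ``invoking Lemma~\ref{lem:small_rotate} to absorb Lipschitz perturbations'' does not identify a precise step that accomplishes this.

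The genuine gap is in the edge case $q\in\Pi$ (and $\Pi_t$). You propose to write $\Gamma_1$ and $\Psi_t$ as two Lipschitz graphs $x_3=f_\pm(x_1,x_2)$ over the two half-boxes $\pm x_2>0$ and concatenate them. Two things go wrong. First, this presupposes that in the chosen coordinates the projections of $\Gamma_1$ and of $\Psi_t$ onto the $(x_1,x_2)$-plane are disjoint half-neighborhoods separated by the projection of $\Pi$, together filling a full box --- but that is essentially the conclusion you are trying to reach, not something you have established; near $q$ the two faces could, in an unfortunate coordinate system, project onto overlapping regions, and nothing you have said rules this out. Second, your appeal to Lemma~\ref{lem:spin} to produce a ``common transversal'' for the two faces is a misapplication: that lemma concerns a \emph{single} Lipschitz boundary $\partial D$ that is already known to be a Lipschitz hypograph in two directions $\hat u$ and $\hat v$, and concludes the same for $a\hat u+b\hat v$; using it to combine coordinate directions associated with two \emph{different} surfaces ($\Gamma_1$ and $\Psi_t$) whose union is the very boundary whose Lipschitz regularity is at stake is circular.

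The paper circumvents both difficulties with a different idea. It writes $D_t^e=\Sigma_{0,t}\cap\Sigma_{-t,t}^1$ (equation~\eqref{eq:intersect}), where each factor is an entire Lipschitz domain by Lemma~\ref{lem:WholeExpansion}, and then invokes the uniform cone characterization (Theorem~\ref{thm:cone}): if a single direction $\wh$ yields the cone property for both $\Sigma_{0,t}$ and $\Sigma_{-t,t}^1$ near $q$, then the smaller of the two cones works for the intersection, so $D_t^e$ has the cone property in the $\wh$-direction there too. This converts the problem into finding one $\wh$ transversal to both factors, which the paper does by an explicit rotation $\wh=(0,\cos\alpha,\sin\alpha)$ with $\alpha=\tfrac12(\pi/2+\gamma_M)$ in local coordinates with $x_3=-\hat v(p)$, verifying condition~\eqref{eq:11} of Lemma~\ref{lem:small_rotate} for $\Sigma_{0,t}$ directly, and for $\Sigma_{-t,t}^1$ via the $C^1$ regularity of the surface (Steps~3 and~4). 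The intersection/cone argument is what lets one combine information about the two adjacent faces without first having to place them simultaneously as graphs in one coordinate frame --- precisely the thing your concatenation scheme implicitly assumes.
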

\begin{proof}
  We prove the Lipschtizness of each of the components in the
  decomposition $\partial D_{t}^{e}
  =\Gamma_1\cup\Gamma_{1,t}\cup\Psi_t\cup\Pi\cup\Pi_{t}$. 

  Obviously $\Gamma_1$, being part of $\d D$, is Lipschitz (since a
  Lipschitz function remains Lipschitz when the $x_3$-direction is
  reversed). That the surface $\Gamma_{1,t}$ is locally Lipschitz at
  all of its (interior) points follows from
  Lemma~\ref{lem:WholeExpansion}. Hence it suffices to consider points
  in the remaining components $\Psi_t, \Pi,$ and $\Pi_t$.

  To prove that $\Psi_t$ is a Lipschitz hypograph near a point $q \in
  \Psi_t$, we first note that $q\in \Pi_{t'}$ for some $0<t'<t$.  By
  Lemma~\ref{lem:ClosedCurve}, the curve $\Pi_{t'}$ is a piecewise
  $C^1$-dissection of $S_{t'}$. Then, reviewing the proof of
  Lemma~\ref{lem:ClosedCurve}, we find that there is a coordinate box
  $\CC_{r,h}$ near $q$ in the $\hat v(q)$-direction, where $\hat v$ is
  same transversal vector field given by Lemma~\ref{lem:transvers},
  such that $\Pi_{t'} \cap \CC_{r,h}$ in the local coordinates
  $(x_1,x_2,x_3)$ takes the form $(x_1, \rho(x_1),
  \zeta(x_1,\rho(x_1))$ (see Definition~\ref{defn:diss}). This means
  that near $q$, the surface $\Psi_t$ can be parametrized by
  \begin{equation}
    \label{eq:12}
      (x_1,s) \mapsto  
      X(x_1,s)\defn
      (x_1+ s\vh_1, \rho(x_1) + s \vh_2,  \zeta(x_1,\rho(x_1)) + s \vh_3)
  \end{equation}
  where $\vh_j$ is the $j$th component of $\vh( x_1, \rho(x_1), \zeta(
 x_1, \rho(x_1) )$.  Clearly, each $\vh_j$ is Lipschitz. We apply
 Lemma~\ref{lem:inverse} in $\spn(\vh_1,\vh_3)$ with $L = (X_1,X_3)
 \equiv X_{13}(x_1,s) \equiv (x_1,\zeta(x_1,\rho(x_1))) + s
 (\vh_1,\vh_3).$ (Note that the lemma is applicable in two-dimensions
 also -- see Remark~\ref{rem:leminvgeneral}.) Thus we obtain
 \begin{equation}
   \label{eq:14}
     \| X_{13}(x_1,s) - X_{13}(x_1',s') \|_2 \ge 
  C   \| (x_1,s) - (x_1',s') \|_2.
  \end{equation}
  Hence there is a Lipschitz inverse map $T = X_{13}^{-1}$ such that
  $T_1(X_1,X_3) = x_1$ and $T_2(X_1,X_3) = s$. We may therefore write
  $X_2$ in terms of $X_1$ and $X_3$, i.e., $X_2 = \rho(x_1) + s \vh_2
  = \rho \circ T_1 + T_2 \vh_2(T_1, \rho\circ T_1, \zeta(T_1,
  \rho\circ T_1)) \equiv Z(X_1,X_3)$. Clearly $Z$ is a Lipschitz
  function of $X_1$ and $X_3$.  Consequently, we may rewrite the
  surface representation in~\eqref{eq:12} using new independent
  variables $X_1$ and $X_3$ (keeping the same coordinate directions)
  as
  \[
  (X_1,X_3) \mapsto ( X_1,  Z(X_1,X_3), X_3 ).
  \]
  This proves that $\Psi_t$ is a Lipschitz hypograph near $q$ {\em in
    the $x_2$-direction}. (Without loss of generality, we choose the
  sign so that the $x_2$-direction points outward of $\Psi_t$.)

  It now only remains to consider points on $\Pi$ and $\Pi_t$. We will
  only consider $p \in \Pi$ as the other case is similar.
  We will use the fact that 
  \begin{equation}
    \label{eq:intersect}
    D_{t}^{e}=\Sigma_{0,t}\cap \Sigma_{-t,t}^{1}
  \end{equation}
  where 
  \begin{align*}
    \Sigma_{-t,t}^{1}& =\{q+s\hat{v}(q):q\in\Gamma_{1},s\in
    (-t,t)\}, 
    \\
    \Sigma_{0,t} & =\{q+s\hat{v}(q):q\in\partial D,s\in (0,t)\}.  
  \end{align*}
  Clearly $p \in \d \Sigma_{-t,t}^{1} \cap \d \Sigma_{0,t}$. We will
  prove that $\d D_t^e$ is a Lipschitz hypograph near $p\in \Pi$ in
  four steps:
  
  \begin{figure}
    \centering
    \begin{tikzpicture}
      \coordinate (a1) at (-0.7,0.1);
      \coordinate (a2) at (0,0);
      \coordinate (a3) at (1.5,-0.5);
      \coordinate (a4) at (2.5,0.25);
      \coordinate (a5) at (3,0.5);

      \coordinate (b1) at ($(a1)+(0,-1)$);
      \coordinate (b2) at ($(a2)+(0,-1)$);
      \coordinate (b3) at ($(a3)+(0,-1)$);
      \coordinate (b4) at ($(a4)+(0,-1)$);
      \coordinate (b5) at ($(a5)+(0,-1)$);

      \coordinate (t1) at ($(a1)+(0,1.4)$);
      \coordinate (t5) at ($(a5)+(0,1.0)$);

      \node [below left] at (t5) {$D$};
      \node [left] at (a1) {$\d D$};

      \fill [red!30,opacity=0.3]  (t1)--(b1)--(b2)--(b3)--(b4)--(b5)--(t5);
      \fill [red!30,opacity=0.5]  (a1)--(a2)--(a3)--(a4)--(a5)--(b5)--(b4)
                                --(b3)--(b2)--(b1)--cycle;

      \node [left] at ($(b5)!0.2!(a5)$) {$\Sigma_{0,t}$};

      \draw (a1)--(a2)--(a3)--(a4)--(a5);
      \draw (b1)--(b2)--(b3)--(b4)--(b5);
      \draw[line width=4pt, opacity=0.5] (a2)--(a3)--(a4);
      \node[above left] at (a3)  {\raisebox{0.5em}{$\gm_1$}} ;
      
      \coordinate (w) at ($(a2)+(-0.5,0.8)$);
      \coordinate (ww) at ($(a2)!0.6!(w)$);
      \coordinate (wl) at ($(ww)!0.2cm!90:(a2)$);
      \coordinate (wr) at ($(ww)!0.2cm!270:(a2)$);
      \draw[->] (a2)-- (w) node[above] {$\wh$};
      \fill [blue,opacity=0.5] (a2)--(wl)--(wr);
      
      \coordinate (wlt) at ($(wl)+(1.2,-1.5)$);
      \coordinate (wrt) at ($(wr)+(1.2,-1.5)$);
      \coordinate (wm)  at ($(wlt)!0.5!(wrt)$);
      \coordinate (pp)  at ($(wm)!0.6cm!270:(wrt)$);
      
      \fill (a2) circle (2pt) node [below left]  {$p$};
      \fill (pp) circle (2pt) node[left] {$p'$};
      \fill [blue, opacity=0.5] (pp)--(wlt)--(wrt)--cycle;

      % \draw[dotted] (a2)--(b2)  (a4)--(b4);
     
    \end{tikzpicture}
    \quad 
    \begin{tikzpicture}
      \coordinate (a1) at (-0.7,0.1);
      \coordinate (a2) at (0,0);
      \coordinate (a3) at (1.5,-0.5);
      \coordinate (a4) at (2.5,0.25);
      \coordinate (a5) at (3,0.5);

      \coordinate (b1) at ($(a1)+(0,-1)$);
      \coordinate (b2) at ($(a2)+(0,-1)$);
      \coordinate (b3) at ($(a3)+(0,-1)$);
      \coordinate (b4) at ($(a4)+(0,-1)$);
      \coordinate (b5) at ($(a5)+(0,-1)$);

      \coordinate (c1) at ($(a1)+(0,1)$);
      \coordinate (c2) at ($(a2)+(0,1)$);
      \coordinate (c3) at ($(a3)+(0,1)$);
      \coordinate (c4) at ($(a4)+(0,1)$);
      \coordinate (c5) at ($(a5)+(0,1)$);

      \coordinate (t1) at ($(a1)+(0,1.4)$);
      \coordinate (t5) at ($(a5)+(0,1.0)$);

      % \node [right] at (t5) {$D$};
      % \node [left] at (a1) {$\d D$};

      % D
      \fill [red!30,opacity=0.3] (t1)--(a1)--(a2)--(a3)--(a4)--(a5)--(t5);
      % Sigma
      \draw [fill=red!20] (c4)--(b4)--(b3)--(b2)
      --(c2)--(c3)--cycle;
      
      \node at ($(c3)!0.3!(a3)$) {$\Sigma^1_{-t,t}$};

      \draw (a1)--(a2)--(a3)--(a4)--(a5);
      \draw[line width=4pt, opacity=0.5] (a2)--(a3)--(a4);
      %\node[above] at (a3)  {$\gm$} ;
      
      \coordinate (w) at ($(a2)+(-0.5,0.8)$);
      \coordinate (ww) at ($(a2)!0.6!(w)$);
      \coordinate (wl) at ($(ww)!0.2cm!90:(a2)$);
      \coordinate (wr) at ($(ww)!0.2cm!270:(a2)$);
      \draw[->] (a2)-- (w) node[above] {$\wh$};
      \fill [blue,opacity=0.5] (a2)--(wl)--(wr);
      
      \coordinate (wlt) at ($(wl)+(1.2,-1.5)$);
      \coordinate (wrt) at ($(wr)+(1.2,-1.5)$);
      \coordinate (wm)  at ($(wlt)!0.5!(wrt)$);
      \coordinate (pp)  at ($(wm)!0.6cm!270:(wrt)$);
      
      \fill (a2) circle (2pt) node [below left]  {$p$};
      \fill (pp) circle (2pt) node[left] {$p'$};
      \fill [blue, opacity=0.5] (pp)--(wlt)--(wrt)--cycle;

      % \draw[dotted] (a2)--(b2)  (a4)--(b4);

    \end{tikzpicture}
    \quad
    \begin{tikzpicture}
      \coordinate (a1) at (-0.7,0.1);
      \coordinate (a2) at (0,0);
      \coordinate (a3) at (1.5,-0.5);
      \coordinate (a4) at (2.5,0.25);
      \coordinate (a5) at (3,0.5);

      \coordinate (b1) at ($(a1)+(0,-1)$);
      \coordinate (b2) at ($(a2)+(0,-1)$);
      \coordinate (b3) at ($(a3)+(0,-1)$);
      \coordinate (b4) at ($(a4)+(0,-1)$);
      \coordinate (b5) at ($(a5)+(0,-1)$);

      \coordinate (c1) at ($(a1)+(0,1)$);
      \coordinate (c2) at ($(a2)+(0,1)$);
      \coordinate (c3) at ($(a3)+(0,1)$);
      \coordinate (c4) at ($(a4)+(0,1)$);
      \coordinate (c5) at ($(a5)+(0,1)$);

      \coordinate (t1) at ($(a1)+(0,1.4)$);
      \coordinate (t5) at ($(a5)+(0,1.0)$);

      % \node [right] at (t5) {$D$};
      \node [above right] at (a5) {$\d D$};

      % % D
      % \fill [red!30,opacity=0.3] (t1)--(a1)--(a2)--(a3)--(a4)--(a5)--(t5);
      % D_t^e
      \draw [fill=red!30] (a4)--(b4)--(b3)--(b2)
      --(a2)--(a3)--cycle;
      
      \node at ($(b3)!0.5!(a3)$) {$D_t^e$};

      \draw (a1)--(a2)--(a3)--(a4)--(a5);
      \draw[line width=4pt, opacity=0.5] (a2)--(a3)--(a4);
      %\node[above] at (a3)  {$\gm$} ;
      
      \coordinate (w) at ($(a2)+(-0.5,0.8)$);
      \coordinate (ww) at ($(a2)!0.6!(w)$);
      \coordinate (wl) at ($(ww)!0.2cm!90:(a2)$);
      \coordinate (wr) at ($(ww)!0.2cm!270:(a2)$);
      \draw[->] (a2)-- (w) node[above] {$\wh$};
      \fill [blue,opacity=0.5] (a2)--(wl)--(wr);
      
      \coordinate (wlt) at ($(wl)+(1.2,-1.5)$);
      \coordinate (wrt) at ($(wr)+(1.2,-1.5)$);
      \coordinate (wm)  at ($(wlt)!0.5!(wrt)$);
      \coordinate (pp)  at ($(wm)!0.6cm!270:(wrt)$);
      
      \fill (a2) circle (2pt) node [below left]  {$p$};
      \fill (pp) circle (2pt) node[left] {$p'$};
      \fill [blue, opacity=0.5] (pp)--(wlt)--(wrt)--cycle;

      % \draw[dotted] (a2)--(b2)  (a4)--(b4);

    \end{tikzpicture}
    \caption{The uniform cone property in the $\wh$-direction (near
      $p\in \Pi$) holds for the domains $\Sigma_{0,t}$ (left) and
      $\Sigma_{-t,t}^1$ (center), so it holds for the
      intersection~$D_t^e$ (right).}
    \label{fig:intersect}
  \end{figure}
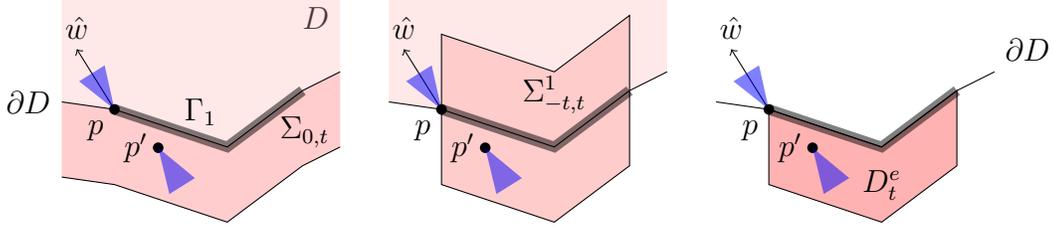

  {\em Step~1.} We claim that if there is a direction $\wh$ such that
  {\em both} $ \d\Sigma_{-t,t}^{1}$ and $ \d\Sigma_{0,t}$ are Lipschitz
  hypographs near $p\in \Pi$ in the $\wh$-direction, then the $D_t^e$ is
  also a Lipschitz hypograph near $p$ in the $\wh$-direction.

  To prove this claim, we use the uniform cone property. By
  Theorem~\ref{thm:cone}, there is a neighborhood $V_1$ of $p$ and a
  cone $K_{\theta_1,h_1,\wh}$ such that $p'- K_{\theta_1,h_1,\wh}
  \subseteq \Sigma_{-t,t}^{1}$ for all $p' \in V_1 \cap \bar
  \Sigma_{-t,t}^{1}$ (see Figure~\ref{fig:intersect}). Similarly there
  is a neighborhood $V_2$ and a cone $K_{\theta_2,h_2,\wh}$ such that
  $p'- K_{\theta_2,h_2,\wh} \subseteq \Sigma_{0,t}$ for all $p' \in
  V_2 \cap \bar \Sigma_{0,t}$. Hence considering a smaller hypercube
  $V$ in the intersection $V_1 \cap V_2$, in view
  of~\eqref{eq:intersect}, we find that for all $p' \in V \cap D_t^e$,
  we have $p'- K \subset D_t^e$, where $K$ is the smaller of the two
  cones. Thus $D_t^e$ satisfies the uniform cone property in the
  $\wh$-direction and the claim follows.

  {\em Step~2}. We now find a $\wh$ such that $ \d\Sigma_{0,t}$ is a
  Lipschitz hypograph near $p$ in the $\wh$-direction.

  We know that $\d D$ is a Lipschitz hypograph near $p$ in the $-\vh(p)$
  direction, which we now take to be our local $x_3$-direction -- see
  Figure~\ref{fig:wh}.  Let $\tan \gamma_M$ be the corresponding
  Lipschitz constant (as in~\eqref{eq:gammaM}). Define
 \begin{equation}
    \label{eq:13}
  \wh = (0,\cos\alpha,\sin\alpha), \qquad
  \text{where}
  \qquad
  \alpha = \frac 1 2 \left(\frac \pi 2 +  \gamma_M   \right).
  \end{equation}
  Clearly, by construction, $0<\gamma_M< \alpha < \pi/2$.
  Hence,~\eqref{eq:11} is satisfied since $\sin \gamma_M < \sin
  \alpha$. By Lemma~\ref{lem:small_rotate}, we conclude that
  $\d\Sigma_{0,t}$ is a Lipschitz hypograph near $p$ in the
  $\wh$-direction.

  \begin{figure}
    \centering
    \begin{tikzpicture}
      \coordinate (a1)  at (0,0);
      \coordinate (a2)  at (3,-1);
      \coordinate [label=above left:$\Pi$]  (a3)  at (4,0);
      \coordinate (a4)  at (0.5,0.5);
      
      \coordinate (b1) at (-1,-0.2);
      \coordinate [label=left:$\d D$] (b2) at (-2.3,1.5);
      \coordinate (b3) at (3.4,1.1);
      \coordinate (b4) at (4.7,2.);
      \coordinate (b5) at (0.5,2.3);
      \coordinate (b6) at (-1.2,2.0);
      
      \coordinate (c1) at ($(a1)-(0,1)$);
      \coordinate [label=below:$D_t^e$] (c2) at ($(a2)-(0,1)$);
      \coordinate (c3) at ($(a3)-(0,1)$);
      \coordinate (c4) at ($(a4)-(0,1)$);
      
      \draw[very thick,fill=red!50] (a1)--(a2)--(a3)--(a4)--cycle;
      \draw[fill=red!50] (a1)--(c1)--(c2)--(a2);
      \draw[fill=red!50] (a2)--(c2)--(c3)--(a3);
      
      \coordinate [label=above right:$p$] (p) at ($(a1)!0.4!(a2)$);
      \coordinate [label=above:$-\vh$] (p3) at ($(p)+(0,2.1)$);
      \coordinate [label=above:$x_1$] (p1) at ($(p)+(-1.3,.8)$);
      \coordinate [label=below left:$x_2$] (p2) at ($(p)+(-2,-0.7)$);
      \fill (p) circle (2pt);
      \draw[->] (p)--(p1) ;
      \draw[->] (p)--(p2) ;
      \draw[->] (p)--(p3) node [anchor=north west] {($x_3$)};
      
      \coordinate (t3) at ($(p)!0.15!(p3)$);
      \coordinate (t2) at ($(p)!0.15!(p2)$);
      \coordinate (tp3) at ($(t3)-(p)$);
      \coordinate (tp2) at ($(t2)-(p)$);
      \coordinate (t)  at ($(tp3)+(tp2)+(p)$);
      \coordinate (s1) at ($(p)!0.15!(p1)$);
      \coordinate (s2) at ($(p)!0.15!(p2)$);
      \coordinate (sp1) at ($(s1)-(p)$);
      \coordinate (sp2) at ($(s2)-(p)$);
      \coordinate (s)  at ($(sp1)+(sp2)+(p)$);
      \draw (p)--(t3)--(t)--(t2)--cycle;
      %\draw (p)--(s1)--(s)--(s2)--cycle;
  
      \coordinate (wh)  at ($1.6*(tp3)+6.65*(tp2)+(p)$);
      \draw[->] (p)--(wh) node [anchor=east] {$\wh$};
      \draw[->] ($(p)!0.65!(p2)$) arc (190:171:1) 
                 node [anchor=north east] {$\alpha$};

      \draw[fill=red!20,opacity=0.3] (a1)--(b1)--(a2);
      \draw[fill=red!30,opacity=0.3] (b4)--(a3)--(a4)--(b5);
      \draw[fill=red!30,opacity=0.3] (b6)--(a1)--(a4)--(b5);
      \draw[fill=red!30,opacity=0.3] (b2)--(b1)--(a1)--(b6);
      \draw[fill=red!30,opacity=0.3] (b3)--(a2)--(a3)--(b4);
      \draw[fill=red!30,opacity=0.3] (b3)--(a2)--(b1)--(b2);
    \end{tikzpicture}
    \caption{Determining the direction vector $(\wh)$ for the Lipschitz hypograph near $p\in \Pi$.}
    \label{fig:wh}
  \end{figure}
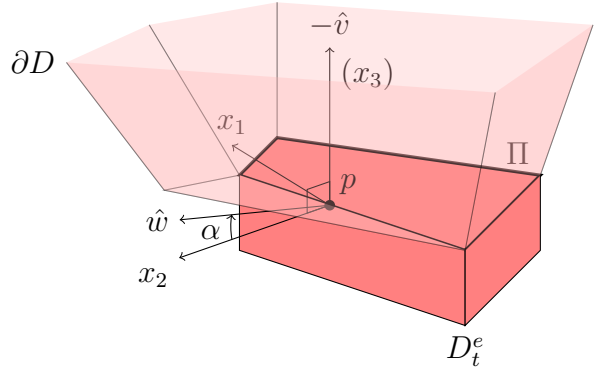

  {\em Step~3.} Suppose $p$ is not one of the exceptional points
  $p_i$. Then we claim that $\d\Sigma_{-t,t}^{1}$ is a Lipschitz
  hypograph near $p$ in the $\wh$-direction (with the same $\wh$ as
  in~\eqref{eq:13}).

  To prove this, we recall that $\d\Sigma_{-t,t}^{1}$ can locally be
  represented by the same map as in~\eqref{eq:12}.  However, this
  time, the components are not merely Lipschitz: they are in fact
  $C^1$. This is because, near $p$, the curve $( x_1, \rho(x_1),
  \zeta(x_1,\rho(x_1))$ is $C^1$, so each of the components must be
  $C^1$. Moreover, since $\vh_j$ is the $j$th component of the
  globally smooth $\vh$ along the $C^1$ curve, it is also $C^1$. Thus
  the map~\eqref{eq:12} defines a $C^1$ surface near $p$, so in
  particular, $\d\Sigma_{-t,t}^{1}$ is Lipschitz.  

  It only remains to verify that $\d\Sigma_{-t,t}^{1}$ is a Lipschitz
  hypograph {\em in the $\wh$-direction}. Standard geometrical
  arguments can be used to show this. But to be self-contained, we
  give a proof: The vector $\vh(p)$, which in local coordinates is
  $(0,0,1)$, together with the tangent to the curve $\Pi$ at $p$,
  which we denote by $(a,b,c)$, span the tangent plane of the $C^1$
  surface $\d\Sigma_{-t,t}^{1}$ at $p$. Moreover, due to the
  representation~\eqref{eq:12}, $a\ne 0$. Hence the normal vector $n$
  to $\d\Sigma_{-t,t}^{1}$ is in the direction $( 0,0,1) \times
  (a,b,c)$. In particular, $|n \cdot \wh |= |a \cos\alpha| \ne 0$.
  Now, since $\d\Sigma_{-t,t}^{1}$ is $C^1$, in the coordinate system
  $(z_1,z_2,z_3)$, with $p$ as the origin, and with $z_3$-direction
  equal to the $n$-direction, the surface $\d\Sigma_{-t,t}^{1}$ can be
  parametrized a $(z_1,z_2, \eta(z_1,z_2))$ for a $C^1$ function
  $\eta$ such that $\eta(z_1,z_2) = \lambda_1 z_1^2 + \lambda_2 z_2^2
  + o( \| (z_1,z_2)\|_2^2).$ This implies that the Lipschitz constant
  $M$ in these coordinates can be made arbitrarily small by
  considering a small enough neighborhood. Therefore we can apply
  Lemma~\ref{lem:small_rotate} (with $\hat u = n$ and $\hat v =
  \wh$). Since $ |n \cdot \wh |>0$, by choosing a small enough
  neighborhood, condition~\eqref{eq:11} can be satisfied. We conclude
  that $\d\Sigma_{-t,t}^{1}$ is a Lipschitz hypograph in the
  $\wh$-direction.

  {\em Step~4.} If $p$ coincides with one of the exceptional points
  $p_i$, then  $\d\Sigma_{-t,t}^{1}$ is a Lipschitz hypograph near $p$
  in the $\wh$-direction.

  To prove this, we recall that in a neighborhood of the exceptional
  points, the vector field $\vh$ is designed to be constant
  (Lemma~\ref{lem:transvers}). In the local coordinates, this constant
  vector is $\vh = (0,0,1)$. Hence the parametric representation of
  the surface $\d\Sigma_{-t,t}^{1}$ in~\eqref{eq:12} takes the form
  \[
  X(x_1,s) = (X_1,X_2,X_3) = 
  (x_1, \rho(x_1),  \zeta(x_1,\rho(x_1)) + s).
  \]
  Applying Lemma~\ref{lem:inverse} to $X_{13}(x_1,s) = (x_1,
  \zeta(x_1,\rho(x_1))) + s(0,1)$, we find, as before,
  that~\eqref{eq:14} holds.  Hence, as before, $X_{13}$ is invertible
  and the change of variable $(x_1,s) \mapsto (X_1,X_3)$ is locally
  one-one. Reparametrizing in new $(X_1,X_2,X_3)$-variables
  (but keeping the old coordinate directions) the surface takes the form
  \begin{equation}
    \label{eq:16}
     (X_1, \rho(X_1), X_3).
  \end{equation}
  Since $\rho$ is Lipschitz, this means that
  $\d\Sigma_{-t,t}^{1}$ is a Lipschitz hypograph near $p$ in the
  $x_2$-direction.

  This fact can be used to show that it is also a Lipschitz hypograph
  near $p$ in the $\wh$-direction. Denote the unit vectors in the
  $x_i$-coordinate direction by $e_i$. We rotate the coordinate
  directions in the $e_2$-$e_3$ plane, about the $e_1$-axis, by the
  angle $\alpha$, to get new orthogonal coordinate directions $e_i'$.
  Clearly, $e_2'=\wh$ due to~\eqref{eq:13}. If the coordinates $x_i'$
  are such that $x_1'e_1' + x_2'e_2' + x_3'e_3' = X_1e_1 + X_2e_2 +
  X_3e_3 $, then 
  \begin{equation*}
    \begin{bmatrix}
      x_1' \\ x_2' \\ x_3'
    \end{bmatrix}
    = 
    \begin{bmatrix}
      1 & 0 & 0 \\
      0 &  \phantom{-}\cos \alpha  & \sin\alpha\\
      0 &  -\sin \alpha & \cos\alpha
    \end{bmatrix}
    \begin{bmatrix}
      X_1 \\ X_2 \\ X_3
    \end{bmatrix}.
  \end{equation*}
  Using this to map~\eqref{eq:16}, we conclude that the points on
  surface $\d\Sigma_{-t,t}^{1}$ near $p$ takes the form $x_1'e_1' +
  x_2'e_2' + x_3'e_3'$ with 
  \[
  x_1'= X_1,\qquad
  x_2'= \rho(X_1)\cos \alpha + X_3\sin\alpha, \quad
  x_3'= -\rho(X_1) \sin\alpha+ X_3\cos\alpha.
  \]
  We will now make one more change of variables:
  We can apply Lemma~\ref{lem:inverse} to the two-dimensional map
  \[
  (X_1,X_3) \mapsto (X_1',X_3')\equiv ( X_1, -\rho(X_1) \sin\alpha) +
  X_3(0,\cos\alpha),
  \]
  because $\cos\alpha \ne 0$. As a result, this change of variable is
  locally one-one, so $X_3$ can be expressed as a Lipschitz function
  of $X_1'$ and $X_3'$, namely $X_3 \equiv X_3(X_1',X_3')$.  This
  means that in the $\{e_1',e_2',e_3'\}$-coordinate system, the
  surface $\d\Sigma_{-t,t}^{1}$ near $p$ takes the form
  \[
  ( X_1',\;
  \rho(X_1') \cos\alpha + X_3(X_1',X_3') \,\sin\alpha, \; X_3').
  \]
  Since $e_2'=\wh$, the Lipschtizness of the second component above
  shows that the surface is a Lipschitz hypograph in the
  $\wh$-direction. This completes Step~4.

  {\em Finally}, we conclude the proof of the lemma by noting that we
  have verified the condition in Step~1, so $D_t^e$ is a Lipschitz
  hypograph near all the points $p\in \Pi$.
\end{proof}

\begin{proof}[Proof of Theorem~\ref{thm:expand}]
  Define $\om^e_t=\{p+s\hat{v}(p):\; p\in\Gamma,\; s\in (0,t)\},$
  cf.~\eqref{eq:6}. Let $t_0$ be as given by
  Lemma~\ref{lem:bulge}. Set $\om^e = \om_t^e$ for some $0<t<t_0$. Then
  by Lemma~\ref{lem:bulge}, $\om^e$ is Lipschitz.
  
  That $\tilde\om$ is also Lipschitz follows by the same
  techniques. The only points that require any further explanation are
  the points $p \in \Pi$. Consider the domain obtained by
  transporting $\gm_2=\d\om \setminus \gm$, namely
  $\om_{2,t}^{e} =\{q+s\hat{v}(q):\; q\in\gm_2,\; s\in
  (0,t)\}.$ By Lemma~\ref{lem:bulge}, there exists a $t_0$ such that
  for all $0<t<t_0$, its boundary $\smash[b]{\d \om_{2,t}^{e}}$ is Lipschitz.
  Now, observe that the surface $\d \tilde \om$ coincides with the 
  boundary of $\smash[b]{\d \om_{2,t}^{e}}$ (for some value of~$t$) on a small
  neighborhood of~$p\in \Pi$. Hence $\d\tilde \om$ is a Lipschitz
  hypograph near such~$p$.
\end{proof}

\end{document}